\documentclass[11pt,oneside]{amsart} 
\usepackage{amsmath, amssymb, verbatim, amscd, amsthm, mathrsfs, mathtools}
\usepackage[driverfallback=dvipdfm]{hyperref}
\usepackage{color, graphicx, shortvrb}
\usepackage{enumerate}
\usepackage[all]{xy}

\numberwithin{equation}{section}

\theoremstyle{definition} 
\newtheorem{defn}{Definition}[section]

\theoremstyle{plain}
\newtheorem{thm}[defn]{Theorem}
\newtheorem{lem}[defn]{Lemma}
\newtheorem{prop}[defn]{Proposition}

\theoremstyle{remark}
\newtheorem{rem}[defn]{Remark}

\usepackage[normalem]{ulem}
\usepackage{marginnote}
\usepackage[top=30truemm,bottom=25truemm,left=35truemm,right=35truemm]{geometry}

\newcommand{\C}{\mathbb C} 
\newcommand{\N}{\mathbb N}

\def\Q{\mathbb{Q}}

\newcommand{\set}[1]{\left\{ #1  \right\}}
\newcommand{\norm}[1]{\lVert #1 \rVert}

\author[I.~Matsuzaki]{Izuho Matsuzaki}
\address[I.~Matsuzaki]
{Graduate School of Science and Technology,
Niigata University, Niigata 950-2181, Japan}
\email{matsuzaki@m.sc.niigata-u.ac.jp}

\subjclass[2020]{Primary 46L05; Secondary 16W10, 46B04}
\keywords{$C^*$-algebras, ring isomorphisms,
Kadison's theorem, Gelfand--Kolmogoroff theorem}

\title[Ring isomorphisms in norm between unital $C^*$-algebras]{Involution-preserving ring isomorphisms in norm between unital \(C^*\)-algebras}

\begin{document}

\begin{abstract}
Let \(A\) and \(B\) be nonzero unital \(C^*\)-algebras with units
\(1_A\) and \(1_B\), respectively, and let
\(T\colon A\to B\) be a bijection satisfying
\[
\|T(a+b)\|=\|T(a)+T(b)\|,
\qquad
\|T(ab)\|=\|T(a)T(b)\|,
\qquad
T(a^*)=T(a)^*
\]
for all \(a,b\in A\).
We prove that there exist  a central symmetry \(u\) in \(B\) and a real \( * \)-isomorphism $\Phi\colon A\to B$ such that 
\[
T(a)=u\Phi(a)
\qquad(a\in A).
\]
Moreover, $u$ and $\Phi$ are uniquely determined by $T$.
Conversely, if \(u\in B\) is a central symmetry and
\(\Phi\colon A\to B\) is a real \( * \)-isomorphism, then  
\(T=u\Phi\) is a bijection satisfying the three identities above.
In particular, the two norm identities, together with involution
preservation, force the normalized map $uT$ to preserve the full product,
not merely the Jordan product.
\end{abstract}
\maketitle

\section{Introduction and main result}

A recurring question in Banach algebra theory is how weak  
conditions on sums and products determine
the algebraic structure of a map.
For instance, the Kowalski--Słodkowski theorem \cite{Kowalski_Slodkowski}
shows that a spectral condition on differences forces
a complex-valued functional \(\phi\) on a complex Banach algebra
satisfying \(\phi(0)=0\) to be both linear and multiplicative.
Molnár \cite{Molnar} obtained related characterizations of maps on operator and function algebras through spectral conditions on products.

In this paper, we investigate a related question concerning norms. 
More precisely, we study to what extent bijections between \(C^*\)-algebras preserve the algebraic structure when the identities defining ring isomorphisms
are weakened to norm identities.
Let \(A\) and \(B\) be  Banach algebras.
A bijection \(T\colon A\to B\) is called a
\emph{ring isomorphism in norm} if
\[
\|T(a+b)\|=\|T(a)+T(b)\|,
\qquad
\|T(ab)\|=\|T(a)T(b)\|
\]
for all \(a,b\in A\).
Observe that every ring isomorphism satisfies these conditions.
We prove that, for nonzero unital \(C^*\)-algebras, these norm identities together with preservation of the involution force the map to be a real \(*\)-isomorphism up to multiplication by a central symmetry. 
Neither linearity nor continuity is assumed.

Dong, Lin and Zheng \cite{generalization_of_G-K} introduced
this notion in their generalization of the
Gelfand--Kolmogoroff theorem \cite{GelfandKolmogoroff}.
Let \(X\) and \(Y\) be compact Hausdorff spaces, and let \(C(X,\mathbb K)\) denote the Banach algebra of all
continuous \(\mathbb K\)-valued functions on \(X\),
equipped with the supremum norm,
where \(\mathbb K=\mathbb R\) or \(\mathbb C\).
% The Gelfand--Kolmogoroff theorem \cite{GelfandKolmogoroff}
% states that every ring isomorphism between \(C(X,\mathbb R)\)
% and \(C(Y,\mathbb R)\) is a composition operator induced
% by a homeomorphism from \(Y\) onto \(X\).
Dong, Lin and Zheng showed that ring isomorphisms in norm between
\(C(X,\mathbb R)\)
and \(C(Y,\mathbb R)\) are weighted composition operators,
with weights taking values in \(\{\pm 1\}\).
Thus, after normalization by the weight, these maps
are real algebra isomorphisms.

For complex-valued functions, the two norm identities alone
do not ensure a weighted composition representation,
since even the complex field \(\mathbb C\) admits
discontinuous ring automorphisms \cite{Charnow,kestelman}.
Miura and Takahashi \cite{taira} considered ring isomorphisms
in norm between \(C(X,\mathbb C)\) and \(C(Y,\mathbb C)\)
that preserve complex conjugation.
They obtained a weighted composition representation,
allowing complex conjugation on a clopen subset of \(Y\).
The weight takes values in \(\{\pm 1\}\), and the normalized
map is a real \(*\)-isomorphism.

Since complex conjugation is the involution on
\(C(X,\mathbb C)\), it is natural to ask whether a similar
description holds for involution-preserving ring isomorphisms
in norm between unital \(C^*\)-algebras without assuming
commutativity.
We answer this question affirmatively.

To state the result, let \(A\) and \(B\) be unital
\(C^*\)-algebras, and let \(Z(B)\) denote the center of \(B\).
An element \(u\in Z(B)\) is called a \emph{central symmetry}
if \(u=u^*\) and \(u^2=1_B\), where \(1_B\) is the unit of \(B\).
A bijection \(\Phi\colon A\to B\) is called a
\emph{real \(*\)-isomorphism} if it is real-linear and satisfies
$\Phi(ab)=\Phi(a)\Phi(b)$ and $\Phi(a^*)=\Phi(a)^*$ for all $a,b\in A$.
Our main result is the following.

\begin{thm}\label{thm:main}
Let \(A\) and \(B\) be nonzero unital \(C^*\)-algebras with units
\(1_A\) and \(1_B\), respectively. Suppose that a bijection
\(T\colon A\to B\) satisfies
\begin{align}
  \|T(a+b)\|
&=\|T(a)+T(b)\|,
\label{eq:norm-additive}\\
\|T(ab)\|
&=\|T(a)T(b)\|,
\label{eq:norm-multiplicative}\\
T(a^*)&=T(a)^*
\label{eq:involution}
\end{align}
for every $a,b\in A$.
Then there exist a central symmetry \(u\in B\) and a real
\(*\)-isomorphism \(\Phi\colon A\to B\) such that
\[
T(a)=u\Phi(a)
\qquad
(a\in A).
\]
Moreover, \(u\) and \(\Phi\) are uniquely determined by \(T\).

Conversely, let \(u\in B\) be a central symmetry and let
\(\Phi\colon A\to B\) be a real $*$-isomorphism. 
Then the map \(T\colon A\to B\) given by
\(T(a)=u\Phi(a)\) for every \(a\in A\) is a bijection satisfying
\eqref{eq:norm-additive}, \eqref{eq:norm-multiplicative}, and
\eqref{eq:involution}.
\end{thm}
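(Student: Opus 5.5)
We establish the converse first, since it is a direct verification. If $u$ is a central symmetry and $\Phi$ is a real $*$-isomorphism, then $T=u\Phi$ is bijective because $u$ is invertible. Moreover $u$ is a central unitary, so multiplication by $u$ is isometric and commutes with everything. Hence
\[
\|u\Phi(a+b)\|=\|\Phi(a)+\Phi(b)\|=\|T(a)+T(b)\|,
\]
and $T(a)T(b)=u^2\Phi(a)\Phi(b)=\Phi(ab)$, so $\|T(ab)\|=\|T(a)T(b)\|$. Finally $T(a^*)=u\Phi(a)^*=(u\Phi(a))^*$ because $u=u^*$ is central. Uniqueness is also quick. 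Since $\Phi(1_A)=1_B$, any representation forces $u=T(1_A)$, and then $\Phi=uT$.

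For the main direction, the plan is to reduce to a known Mazur--Ulam type statement and then to Kadison's theorem together with a Jordan-to-associative upgrade.

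\emph{Step 1 (basic values).} Taking $a=b=0$ in \eqref{eq:norm-additive} gives $\|T(0)\|=2\|T(0)\|$, so $T(0)=0$. Taking $b=-a$ gives $\|T(a)+T(-a)\|=0$, so $T$ is odd. From \eqref{eq:norm-additive} with $b=-c$ we get
\[
\|T(a)-T(c)\|=\|T(a)+T(-c)\|=\|T(a-c)\|.
\]

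\emph{Step 2 (isometry).} We next show $\|T(x)\|=\|x\|$. Put $u=T(1_A)$. Then $u$ is selfadjoint by \eqref{eq:involution}. By \eqref{eq:norm-multiplicative}, $\|T(x)\|=\|T(x)u\|=\|uT(x)\|$ and $\|u^2\|=\|u\|$, so $\|u\|\in\{0,1\}$; bijectivity rules out $0$, so $\|u\|=1$. For selfadjoint $h$, iterating \eqref{eq:norm-multiplicative} gives $\|T(h^{2^n})\|=\|T(h)\|^{2^n}$. Comparing growth rates, we aim to show that $\alpha(x)=\|T(x)\|$ is a norm-like function that is multiplicative on selfadjoint powers and satisfies the $C^*$-identity, since
\[
\alpha(x^*x)=\|T(x)^*T(x)\|=\alpha(x)^2.
\]
The expected route is to show that $\alpha$ and $\|\cdot\|$ agree on scalars $t1_A$ first, via $T(t1_A)$ commuting with all of $T(A)$ up to norm identities, and then to obtain $\alpha=\|\cdot\|$ through spectral radius formulas. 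This step is the main obstacle, because $T$ is not assumed continuous and real scalars are not obviously preserved. What I would try first is to show that $T$ is additive. Step 1 gives $\|T(a)+T(b)-T(a+b)\|$ controlled by the metric structure, and we would then apply the Mazur--Ulam theorem once $T$ is known to be a surjective isometry ($\|T(a)-T(c)\|=\|a-c\|$), which yields real-linearity.

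\emph{Step 3 (structure).} With $T$ a real-linear surjective isometry, Kadison's theorem in its real version (Chu--Dang--Russo--Ventura) gives $T=u\Psi$. Here $u=T(1_A)$ is a unitary, and $\Psi$ is a real Jordan $*$-isomorphism. Since $T(1_A)^*=T(1_A)$, $u$ is a symmetry. We must also show that $u$ is central. Using \eqref{eq:norm-multiplicative} with $a=1_A$ gives $\|T(b)\|=\|uT(b)\|$, which alone is automatic, so instead I would use \eqref{eq:involution}: $T(h)$ is selfadjoint for every selfadjoint $h$, that is, $u\Psi(h)=\Psi(h)u$. Selfadjoints generate, so $u$ is central.

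\emph{Step 4 (upgrade to $*$-isomorphism).} A Jordan $*$-isomorphism splits as a direct sum of a $*$-homomorphism part and a $*$-antihomomorphism part, via a central projection $p$. On the antihomomorphism part we would have $\|\Psi(ab)\|=\|\Psi(b)\Psi(a)\|$, while \eqref{eq:norm-multiplicative} gives $\|\Psi(a)\Psi(b)\|$. Choosing noncommuting pieces with $ab=0$ but $ba\ne0$ in the corresponding summand forces that summand to be commutative, where homomorphism and antihomomorphism coincide. Hence $\Phi=\Psi$ is multiplicative, which completes the proof.
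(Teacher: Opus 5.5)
Your converse and uniqueness arguments are correct and match the paper. The centrality argument in Step~3 is also correct: comparing $T(h)=u\Psi(h)$ with $T(h)^*=\Psi(h)u$ for $h\in A_{\mathrm{sa}}$ is slicker than the paper's $pBq$ argument, \emph{once} $T$ is known to be a surjective real-linear isometry. But Step~2 never proves that, and it is the core of the theorem. Your route is circular: Mazur--Ulam applied to $T$ needs $\|T(a)-T(c)\|=\|a-c\|$, i.e.\ $\|T(x)\|=\|x\|$, which is the very claim. The facts $\|u\|=1$ and $\|T(h^{2^n})\|=\|T(h)\|^{2^n}$ yield nothing without an a priori comparison between $\|T(\cdot)\|$ and $\|\cdot\|$, and none is available while $T$ is not known to be additive, homogeneous or continuous.

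Additivity can in fact be obtained from your Step~1 and Mazur--Ulam. Apply Mazur--Ulam on $B$ to $v\mapsto T(T^{-1}(v)+z)$: this is a surjective isometry moving every point by exactly $\|T(z)\|$, hence affine and then a translation. The paper instead cites Tabor. Isometry still requires the paper's chain:
\begin{itemize}
\item First show $u=T(1_A)$ is a central symmetry \emph{before} any isometry is known. This makes $\Phi=uT$ unital and $*$-preserving with $\Phi(a)\Phi(b)=T(a)T(b)$, so the ordering ``isometry first, centrality afterwards'' does not work.
\item The estimate $\|1_B-\Phi(c^2)\|=\|(1_B-\Phi(c))(1_B+\Phi(c))\|\le 1$ gives positivity.
\item Order then gives $\|\Phi(h)\|\le\|h\|$ on $A_{\mathrm{sa}}$, hence continuity and real-linearity there.
\item The open mapping theorem plus the $2^n$-power trick give equality on $A_{\mathrm{sa}}$, and the $C^*$-identity extends it to $A$.
\end{itemize}

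Step~4 also fails as written. A Jordan $*$-isomorphism between $C^*$-algebras does \emph{not} in general split into a homomorphism and an antihomomorphism via a central projection of $B$. That is Kadison's result for von Neumann algebras; for $C^*$-algebras the projection in general lives only in the enveloping von Neumann algebra. For example, on $\{f\in C([-1,1],M_2(\C)) : f(0)\ \text{diagonal}\}$, the map equal to $f$ on $[-1,0]$ and to $f^{\mathrm T}$ on $[0,1]$ is a Jordan $*$-automorphism. It is neither multiplicative nor antimultiplicative, yet the algebra has only the trivial central projections.

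In the bidual your ``choose $ab=0$, $ba\ne0$ in the summand'' argument cannot be run, because the hypothesis only concerns $a,b\in A$. Moreover, the global identity $\|\Psi(ab)\|=\|\Psi(a)\Psi(b)\|$ does not pass to summands, since $\|x\|=\max(\|px\|,\|(1-p)x\|)$. What norm-multiplicativity really yields is zero-product preservation, and turning that into multiplicativity needs a genuine theorem. The paper uses the splitting that does exist inside $B$: the Hatori--Watanabe central projection separating the complex-linear and conjugate-linear parts. It then applies the Chebotar--Ke--Lee--Wong theorem on surjective bounded unital zero-product-preserving maps to each part, passing to $(e_2B)^{\mathrm{op}}$ for the conjugate-linear one.
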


In the commutative case, Miura and Takahashi \cite{taira}
first show that the normalized map is a surjective
real-linear isometry. They then apply a Banach--Stone type
theorem for real-linear isometries to obtain its
representation, from which multiplicativity follows.
In the noncommutative case, however, establishing
the isometry property does not suffice to recover
multiplicativity.

Indeed, Kadison \cite{Kadison_isometry} proved that every
surjective complex-linear isometry between unital
\(C^*\)-algebras is the product of a unitary and a Jordan
\(*\)-isomorphism
(that is, a bijective complex-linear map preserving
squares and the involution).
Thus, even under complex linearity, removing the unitary
factor yields a map that need not preserve products.
In our setting, the product norm condition allows us
to establish multiplicativity of the normalized map.
The following example illustrates why the isometry
property alone is insufficient.

% The preservation of the associative product is important
% in the noncommutative setting.
% Kadison \cite{Kadison_isometry} proved that every surjective
% complex-linear isometry between unital \(C^*\)-algebras is
% the product of a unitary and a Jordan \(*\)-isomorphism
% (that is, a bijective complex-linear map preserving squares
% and the involution).
% In our setting, the normalized map $\Phi=uT$ is a real-linear isometry,
% and the product norm condition ensures that it also
% preserves the associative product.
% The following example shows why the isometry property
% alone is not sufficient, even for unital complex-linear maps.

\begin{rem}
For \(n\geq 2\), consider the transpose map
\(\tau\colon M_n(\mathbb{C})\to M_n(\mathbb{C})\), defined by
\[
\tau(a)=a^{\mathrm T}
\qquad
(a\in M_n(\mathbb{C})).
\]
This map is a unital surjective complex-linear isometry and a Jordan
\(*\)-automorphism, but it reverses the order of multiplication:
\[
\tau(ab)=\tau(b)\tau(a)
\qquad
(a,b\in M_n(\mathbb{C})).
\]
Moreover, it does not satisfy \eqref{eq:norm-multiplicative}. To see
this, let \(e_{ij}\) denote the standard matrix unit whose
\((i,j)\)-entry is \(1\) and whose other entries are \(0\). Taking
\(a=e_{12}\) and \(b=e_{11}\), we obtain
\(ab=0\) and \(\tau(a)\tau(b)=e_{21}\). Consequently,
\(\|\tau(ab)\|=0\), whereas
\(\|\tau(a)\tau(b)\|=1\). 
Thus, the product norm condition rules out anti-multiplicative
behavior that cannot be excluded by the isometric structure alone.
\end{rem}

We briefly outline the proof.
We first show that \(u=T(1_A)\) is a central symmetry and
normalize \(T\) by setting \(\Phi(a)=uT(a)\).
By analyzing the behavior of \(\Phi\) on self-adjoint
and positive elements, we prove that \(\Phi\) is
a surjective real-linear isometry.
We then apply \cite[Proposition~2.1]{Hatori_Watanae}.
The product norm condition implies that the complex-linear
and conjugate-linear parts preserve zero products.
Finally, \cite[Theorem~4.11]{Chebotar} shows that both parts
are multiplicative. This proves that \(\Phi\) is a real
\(*\)-isomorphism.

\section{Normalization of \texorpdfstring{\(T\)}{T}}

Throughout the remainder of the paper, let \(A\) and \(B\) be nonzero unital
\(C^*\)-algebras with units $1_A$ and $1_B$, respectively, and let \(T\colon A\to B\) be a bijection satisfying
\eqref{eq:norm-additive}, \eqref{eq:norm-multiplicative}, and
\eqref{eq:involution}.
Set
\[
u=T(1_A).
\]
In this section, we prove that \(u\) is a central symmetry
and use it to normalize \(T\), obtaining a map \(\Phi\)
with \(\Phi(1_A)=1_B\).
We then establish
the basic properties of \(\Phi\) needed to apply the relevant structure
theorems in the next section.

Applying Tabor's result \cite[Corollary~1]{tabor} with
\(G=(A,+)\), \(X=B\), and \(f=T\), we conclude from the
surjectivity of \(T\) and \eqref{eq:norm-additive} that \(T\) is
additive. Consequently,
\[
T(0)=0,
\qquad
T(ra)=rT(a)
\qquad
(r\in\mathbb{Q},\ a\in A).
\]

We first show that \(u\) is a symmetry.
For an element \(a\) of a unital $C^*$-algebra \(C\),
we write
\[
\sigma(a)=\{\lambda\in\mathbb C:
a-\lambda 1_C \text{ is not invertible in } C\}
\]
for the spectrum of \(a\).

\begin{lem}\label{lem:symmetry}
The element \(u\) is a symmetry; that is,
\[
u^*=u,
\qquad
u^2=1_B.
\]
\end{lem}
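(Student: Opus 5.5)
Self-adjointness of \(u\) is immediate from \eqref{eq:involution}, since \(1_A^*=1_A\) gives
\[
u^*=T(1_A)^*=T(1_A^*)=u .
\]
Hence \(u\) is self-adjoint, \(\sigma(u)\subset\mathbb R\), and \(\|u\|=\max\{|\lambda|:\lambda\in\sigma(u)\}\). The substance of the lemma is therefore \(u^2=1_B\). I plan to show that \(\sigma(u)\subset\{-1,1\}\) using the two norm identities together with additivity of \(T\) and \(\mathbb Q\)-homogeneity.

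\emph{Step 1: a product identity.} Apply \eqref{eq:norm-multiplicative} with \(a=1_A\). For every \(b\in A\),
\[
\|T(b)\|=\|T(1_A b)\|=\|uT(b)\|,
\]
and symmetrically \(\|T(b)\|=\|T(b)u\|\). Since \(T\) is surjective, this gives
\[
\|ub'\|=\|b'\|=\|b'u\| \qquad (b'\in B).
\]
Taking \(b'=1_B\) yields \(\|u\|=1\), so \(\sigma(u)\subset[-1,1]\).

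\emph{Step 2: excluding \(|\lambda|<1\).} Suppose \(\lambda\in\sigma(u)\) with \(|\lambda|<1\). Choose a continuous function \(f\) on \(\sigma(u)\) with \(f(\lambda)=1\), \(0\le f\le 1\), and support in a small neighbourhood of \(\lambda\) on which \(|t|\le r<1\). Put \(b'=f(u)\). Then \(\|b'\|=1\), but by functional calculus
\[
\|ub'\|=\sup_{t\in\sigma(u)}|t f(t)|\le r<1,
\]
which contradicts Step 1. Hence \(\sigma(u)\subset\{-1,1\}\). Since \(u\) is self-adjoint, \(u^2\) is self-adjoint with \(\sigma(u^2)=\{1\}\), so \(u^2=1_B\).

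The only step that needs care is Step 1, where surjectivity of \(T\) is used to conclude \(\|ub'\|=\|b'\|\) for \emph{all} \(b'\in B\), not just for elements of the form \(T(b)\). The functional-calculus argument in Step 2 is routine. Centrality of \(u\) is not part of this lemma and would be handled separately later.
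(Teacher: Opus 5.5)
Your proposal is correct and follows essentially the paper's own argument: you get $u^*=u$ from \eqref{eq:involution}, obtain $\|ux\|=\|x\|$ for all $x\in B$ from \eqref{eq:norm-multiplicative} with $a=1_A$ plus surjectivity (hence $\|u\|=1$), and exclude spectral points of modulus less than $1$ with a bump function in the continuous functional calculus. The right-multiplication identity $\|b'u\|=\|b'\|$ and the announced use of additivity and $\mathbb Q$-homogeneity are never actually needed.
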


\begin{proof}
We first prove that \(u^*=u\).
It follows from \eqref{eq:involution} that 
\[
u^*
=T(1_A)^*
=T(1_A^*)
=T(1_A)
=u.
\]

To prove $u^2=1_B$,
we next show that \(\sigma(u)\subset [-1,1]\).
Let \(x\in B\). Since \(T\) is surjective, there exists \(a\in A\)
such that \(T(a)=x\). By \eqref{eq:norm-multiplicative},
\begin{equation}\label{ux_equal_x}
\norm{ux}
=\norm{T(1_A)T(a)}
=\norm{T(1_Aa)}
=\norm{T(a)}
=\norm{x}.
\end{equation}
Thus left multiplication by \(u\) is norm-preserving.
In particular,
taking \(x=1_B\) in \eqref{ux_equal_x}, we obtain
\[
\norm{u}
=\norm{u1_B}
=\norm{1_B}
=1.
\]
Since \(u=u^*\) and \(\norm{u}=1\), it follows that
$\sigma(u)\subset [-1,1]$.

It remains to show that
$\sigma(u)\subset\set{\pm1}$.
Suppose, to the contrary, that there exists
\(t_0\in\sigma(u)\) such that \(\lvert t_0\rvert<1\). Set
\[
V=\set{t\in\sigma(u):\lvert t\rvert<1}.
\]
Then \(V\) is an open neighborhood of \(t_0\) in \(\sigma(u)\).

By the continuous functional calculus, there exists an isometric
\( * \)-isomorphism
\[
\Gamma\colon C^*(u,1_B)\to C(\sigma(u),\C)
\]
such that \(\Gamma(u)=\mathrm{id}\).
Here \(C^*(u,1_B)\) denotes the unital \(C^*\)-subalgebra
of \(B\) generated by \(u\), and \(\mathrm{id}\) denotes
the identity function on \(\sigma(u)\).
We write \(\|\cdot\|_\infty\) for the supremum norm
on \(C(\sigma(u),\mathbb C)\).
It follows from
\eqref{ux_equal_x} that
\begin{equation}\label{eq:multiplication_norm_preserving}
\norm{\mathrm{id}\,\Gamma(y)}_\infty
=\norm{\Gamma(uy)}_\infty
=\norm{uy}
=\norm{y}
=\norm{\Gamma(y)}_\infty
\end{equation}
for every \(y\in C^*(u,1_B)\).

By Urysohn's lemma, there exists \(f_0\in C(\sigma(u),\C)\) such that
\[
 f_0(\sigma(u))\subset [0,1],
\qquad
f_0(t_0)=1=\norm{f_0}_{\infty},
\qquad
f_0(t)=0
\quad
(t\in\sigma(u)\setminus V).
\]
If \(t\in V\), then
$\lvert \mathrm{id}(t)f_0(t)\rvert
=\lvert tf_0(t)\rvert
\leq \lvert t\rvert
<1$,
whereas if \(t\in\sigma(u)\setminus V\), then
$\lvert \mathrm{id}(t)f_0(t)\rvert
=\lvert tf_0(t)\rvert
=0$.
Thus
\[
\lvert \mathrm{id}(t)f_0(t)\rvert<1
\qquad
(t\in\sigma(u)).
\]
Since the continuous function
\(\lvert \mathrm{id}\,f_0\rvert\) attains its maximum on the compact
set \(\sigma(u)\), it follows that
\[
\norm{\mathrm{id}\,f_0}_\infty<1.
\]

On the other hand, since \(\Gamma\) is surjective, there exists
\(y_0\in C^*(u,1_B)\) such that \(\Gamma(y_0)=f_0\). Applying
\eqref{eq:multiplication_norm_preserving} with \(y=y_0\), we obtain
\[
\begin{aligned}
\norm{\mathrm{id}\,f_0}_\infty
=\norm{\mathrm{id}\,\Gamma(y_0)}_\infty=\norm{\Gamma(y_0)}_\infty
=\norm{f_0}_\infty
=1,
\end{aligned}
\]
which is a contradiction. Consequently, $\sigma(u)\subset\set{\pm1}$.
The continuous functional calculus now gives \(u^2=1_B\). Together
with \(u^*=u\), this proves that \(u\) is a symmetry.
\end{proof}

Having established that \(u\) is a symmetry, we now prove its
centrality. 
Recall that two projections \(p\) and \(q\) are
said to be \textit{orthogonal} if \(pq=qp=0\).
\begin{lem}
The element $u$ belongs to $Z(B)$.
\end{lem}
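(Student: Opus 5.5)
The plan is to split $B$ along the spectral projections of $u$, pull this splitting back to $A$ through $T$, and then show with a norm computation that the off-diagonal corner of $B$ must vanish. Since $u=u^*$ and $u^2=1_B$ by Lemma~\ref{lem:symmetry}, the elements $p=(1_B+u)/2$ and $q=(1_B-u)/2$ are orthogonal projections with $p+q=1_B$ and $u=p-q$. Proving $u\in Z(B)$ is equivalent to proving $pBq=\{0\}$: then $qBp=(pBq)^*=\{0\}$ too, so every $x\in B$ satisfies $x=pxp+qxq$ and commutes with $u$. Note that \eqref{ux_equal_x} alone cannot give this, because $\|ux\|=\|x\|$ holds for any unitary $u$. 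The argument must therefore use additivity, the zero-product information hidden in \eqref{eq:norm-multiplicative}, and \eqref{eq:involution}.

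\emph{Step 1: transfer the corners to $A$.} Since $T$ is an additive bijection with $T(0)=0$, \eqref{eq:norm-multiplicative} gives $ab=0\iff T(a)T(b)=0$ for all $a,b\in A$. Put $e=T^{-1}(p)$ and $f=T^{-1}(-q)$. Then $T(e)+T(f)=p-q=u=T(1_A)$, so $e+f=1_A$. From $p(-q)=(-q)p=0$ we get $ef=fe=0$, hence $e^2=e(1_A-f)=e$. By \eqref{eq:involution}, $e^*=e$, so $e$ is a projection in $A$ with $T(e)=p$ and $T(1_A-e)=-q$. Running the zero-product equivalence on the corners $eAe$, $eA(1_A-e)$, $(1_A-e)Ae$, $(1_A-e)A(1_A-e)$ shows that $T$ maps each corner of $A$ into the corresponding corner of $B$; for instance, $w=ec(1_A-e)$ gives $Y:=T(w)=pYq$. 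Surjectivity of $T$ together with additivity then shows that $T$ maps $eA(1_A-e)$ onto $pBq$. So it suffices to prove $T(w)=0$ for every $w\in eA(1_A-e)$.

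\emph{Step 2: the sign discrepancy.} In block form relative to $(p,q)$, $T$ sends $e\mapsto p$ but $1_A-e\mapsto -q$; this sign mismatch is what I expect to force $Y=0$. Fix $w\in eA(1_A-e)$ with $Y=T(w)$, and set $s=w+w^*$, so that $T(s)=Y+Y^*$ by \eqref{eq:involution}. For rational $t$, \eqref{eq:norm-multiplicative} and additivity give
\[
\|(u+tY+tY^*)^2\|=\|T((1_A+ts)^2)\|=\|u+2t(Y+Y^*)+t^2\bigl(T(ww^*)+T(w^*w)\bigr)\|,
\]
where $T(ww^*)\in pBp$ and $T(w^*w)\in qBq$. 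The left side is easy to compute: the off-diagonal terms cancel ($pY-Yq=0$), leaving $\operatorname{diag}(p+t^2YY^*,\,q+t^2Y^*Y)$, whose norm is $1+t^2\|Y\|^2$. The right side is a perturbation of the block matrix $\begin{pmatrix}p&2tY\\2tY^*&-q\end{pmatrix}$, whose norm is $\sqrt{1+4t^2\|Y\|^2}\approx 1+2t^2\|Y\|^2$. This exceeds $1+t^2\|Y\|^2$ to second order in $t$ whenever $Y\neq0$.

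\emph{Main obstacle.} The diagonal corrections $t^2T(ww^*)$ and $t^2T(w^*w)$ are of the same order $t^2$, so I need control over their signs. Specifically, I want to show that $T$ sends positive elements of $eAe$ to positive elements of $pBp$, and positive elements of $(1_A-e)A(1_A-e)$ to negative elements of $qBq$. My first attempt would use $T(h^2)$ and the identity $\|T(h^2)\|=\|T(h)^2\|$ for self-adjoint $h$ in a corner, combined with additivity against $T(e)=p$ (respectively $T(1_A-e)=-q$). With those signs, the perturbed matrix $\begin{pmatrix}p+t^2P & 2tY\\ 2tY^* & -q-t^2Q\end{pmatrix}$ with $P,Q\ge0$ has norm at least $\sqrt{1+4t^2\|Y\|^2}$, which contradicts the left side for small rational $t$ unless $Y=0$. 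As a sanity check and fallback, when $w$ is a partial isometry with $ww^*=e$ and $w^*w=1_A-e$, we have $s^2=1_A$ and $\|Y\|=1$. Taking $t=1$ then yields $\|T((1_A+s)^2)\|=2\|u+Y+Y^*\|=2\sqrt2$ against $\|(u+Y+Y^*)^2\|=2$, a direct contradiction that needs no sign information.
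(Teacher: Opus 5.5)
Your route is sound in outline but genuinely different from, and heavier than, the paper's. Step~1 is correct, and so are your Step~2 computations. With $N=T((1_A+ts)^2)$ you have $\|N\|=\|(u+tY+tY^*)^2\|=1+t^2\|Y\|^2$, while $\|u+2t(Y+Y^*)\|=\sqrt{1+4t^2\|Y\|^2}$. The obstacle you flag is real. The bound $\|T(ww^*)\|=\|YY^*\|=\|Y\|^2$ only yields $\|N\|\ge\sqrt{1+4x}-x$ with $x=t^2\|Y\|^2$, and this is always $<1+x$, so sign information is indispensable.

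You leave that sign step as a plan, however, and the identity $\|T(h^2)\|=\|T(h)^2\|$ you cite carries no sign information on its own. What closes it is the corner version of Lemma~\ref{lem:positive_preserve}. For $h=h^*\in eAe$, replace $h$ by $h/k$ with $k\in\N$ so that $\|T(h)\|\le1$. Then $e-h^2=(e-h)(e+h)$ and $T(e\pm h)=p\pm T(h)$ give $\|p-T(h^2)\|=\|p-T(h)^2\|\le1$. Hence the self-adjoint element $T(h^2)$ of $pBp$ has spectrum in $[0,2]$ and is positive. Taking $h=(ww^*)^{1/2}$ gives $P=T(ww^*)\ge0$. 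Compressing, $pN^2p=(p+t^2P)^2+4t^2YY^*\ge p+4t^2YY^*$, so $\|N\|^2\ge1+4t^2\|Y\|^2$. This contradicts $\|N\|=1+t^2\|Y\|^2$ for small rational $t>0$ unless $Y=0$. You do not even need the sign of $T(w^*w)$.

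The paper sidesteps all of this by using a nilpotent instead of the self-adjoint $s$. For $x\in pBq$ with $T(a)=x$, one has $x^2=0$, hence $a^2=0$ by your zero-product equivalence. There is no need to pull $p$ back to a projection $e$. So $(1_A+a)(1_A-a)=1_A$ holds exactly. Since $ux=x$ and $xu=-x$, this gives $\|1_B\mp2x\|=\|(u\pm x)(u\mp x)\|=1$, and summing $(1_B\pm2x)(1_B\pm2x)^*\le1_B$ gives $8xx^*\le0$.

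It is your choice of $1_A+ts$, with $s^2=ww^*+w^*w\neq0$, that creates the second-order terms needing sign control. Using $1_A\pm w$ (where $w^2=0$) in your Step~2 would have landed you on the paper's computation with $x=Y$. What your route buys is extra structure: the projection $e=T^{-1}(p)$, and the compatibility of $T$ with the Peirce decompositions relative to $(e,1_A-e)$ and $(p,q)$. That structure is not needed for centrality.
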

\begin{proof}
Set
\[
p=\frac{1_B+u}{2},
\qquad
q=\frac{1_B-u}{2}.
\]
Since \(u\) is a symmetry, \(p\) and \(q\) are orthogonal
projections satisfying
\[
p+q=1_B,
\qquad
u=p-q.
\]

We show that \(pBq=\set{0}=qBp\).
First, we prove $pBq=\set{0}$.
Let \(x\in pBq\). 
Then 
\[x=pxq,\] and since \(p\) and \(q\) are orthogonal, we have \(x^2=0\).
Since $T$ is surjective, there exists $a\in A$ such that $T(a)=x$.
By \eqref{eq:norm-multiplicative},
\[
\norm{T(a^2)}
=\norm{T(a)^2}
=\norm{x^2}
=0.
\]
Thus \(T(a^2)=0\). Since \(T\) is injective and $T(0)=0$, it follows that
\(a^2=0\). 
Therefore, $(1_A+a)(1_A-a)=1_A$.
Moreover, since $u$ is a symmetry, we have $\norm{u}=1$. 
Hence, by \eqref{eq:norm-multiplicative}, we obtain  
\begin{align}\label{eq:2.2.1}
  \norm{T(1_A+a)T(1_A-a)}=\norm{T\big((1_A+a)(1_A-a)\big)}
  =\norm{T(1_A)}
  =\norm{u}=1.
\end{align}
On the other hand, since \(p\) and \(q\) are orthogonal projections 
and \(u=p-q\), we have
\[
ux=(p-q)pxq=x,
\qquad
xu=pxq(p-q)=-x.
\]
Hence  
$(u+x)(u-x)
=1_B-2x$ by Lemma~\ref{lem:symmetry}.
The additivity of \(T\) gives
$u+x=T(1_A+a)$ and 
$u-x=T(1_A-a)$.
The last three equalities, combined with \eqref{eq:2.2.1}, yield
\begin{align*}
\norm{1_B-2x}
=\norm{(u+x)(u-x)}=\norm{T(1_A+a)T(1_A-a)}
=1.
\end{align*}
Replacing \(x\) by \(-x\) in the preceding argument gives
\(\norm{1_B+2x}=1\).
For each choice of sign, the \(C^*\)-identity gives
\[
\norm{(1_B\pm2x)(1_B\pm2x)^*}
=\norm{1_B\pm2x}^2
=1.
\]
Since \((1_B\pm2x)(1_B\pm2x)^*\) is positive, we have $(1_B\pm2x)(1_B\pm2x)^*\leq1_B$.
Summing the inequalities corresponding to the two choices of sign
gives
\begin{align*}
2\cdot1_B
&\geq
(1_B+2x)(1_B+2x)^*
+(1_B-2x)(1_B-2x)^*\\
&=2\cdot1_B+8xx^*.
\end{align*}
Hence $0\geq xx^*$, and thus $x=0$.
Therefore,
$pBq=\set{0}$.
Taking adjoints gives
$qBp=(pBq)^*=\set{0}$.

To prove $u\in Z(B)$, let \(y\in B\). 
Since \(p+q=1_B\) and \(pBq=qBp=\{0\}\), we have
\[
py=py(p+q)=pyp,\qquad
yp=(p+q)yp=pyp.
\]
Thus \(py=yp\). Since \(y\in B\) was arbitrary, \(p\in Z(B)\).
Consequently, we conclude from $p+q=1_B$ that  
\[
u=p-q=2p-1_B\in Z(B).
\]
\end{proof}

Having shown that \(u\) is a central symmetry, we now use it to
normalize \(T\). 
Define a map \(\Phi\colon A\to B\) by
\[
\Phi(a)=uT(a)
\qquad (a\in A).
\]
Since \(u^2=1_B\), left multiplication by \(u\) is a bijection on
\(B\).
Because \(T\) is bijective, \(\Phi\) is also bijective.

We now verify the basic properties of this normalization.

\begin{prop}\label{prop:properties_of_Phi}
The bijection \(\Phi\) satisfies
\begin{align}
\Phi(a+b)&=\Phi(a)+\Phi(b),\label{eq:Phi_additive}\\
\norm{\Phi(ab)}&=\norm{\Phi(a)\Phi(b)},\label{eq:Phi_multiplicative}\\
\Phi(a^*)&=\Phi(a)^*,\label{eq:Phi_involution}\\
\Phi(1_A)&=1_B\label{eq:Phi_unit}
\end{align}
for all \(a,b\in A\).
Moreover, $\Phi$ preserves zero products; that is,
\begin{equation}\label{eq:Phi_zero_product}
ab=0
\quad\Longrightarrow\quad
\Phi(a)\Phi(b)=0
\end{equation}
for every $a,b\in A$.
\end{prop}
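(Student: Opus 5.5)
Each property follows from additivity of \(T\) (the consequence of Tabor's result recorded above) together with the two preceding lemmas, which say that \(u\) is a central symmetry. I would verify the five items in order.

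\textbf{Additivity.} Left multiplication by \(u\) is additive, so
\[
\Phi(a+b)=uT(a+b)=uT(a)+uT(b)=\Phi(a)+\Phi(b).
\]
This gives \eqref{eq:Phi_additive}.

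\textbf{Product norm.} Since \(u\) is central and \(u^2=1_B\), we have \(\Phi(a)\Phi(b)=uT(a)uT(b)=T(a)T(b)\). By \eqref{ux_equal_x}, or directly because \(u\) is a central symmetry, \(\norm{uy}=\norm{y}\) for all \(y\in B\). Combining these with \eqref{eq:norm-multiplicative},
\[
\norm{\Phi(ab)}=\norm{uT(ab)}=\norm{T(ab)}=\norm{T(a)T(b)}=\norm{\Phi(a)\Phi(b)}.
\]
This gives \eqref{eq:Phi_multiplicative}.

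\textbf{Involution.} Using \(u=u^*\), the centrality of \(u\), and \eqref{eq:involution},
\[
\Phi(a^*)=uT(a)^*=T(a)^*u^*=(uT(a))^*=\Phi(a)^*.
\]
This gives \eqref{eq:Phi_involution}.

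\textbf{Unit.} Directly, \(\Phi(1_A)=uT(1_A)=u^2=1_B\), which is \eqref{eq:Phi_unit}.

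\textbf{Zero products.} Suppose \(ab=0\). Additivity of \(\Phi\) gives \(\Phi(0)=0\). Then \eqref{eq:Phi_multiplicative} yields
\[
\norm{\Phi(a)\Phi(b)}=\norm{\Phi(ab)}=\norm{\Phi(0)}=0,
\]
so \(\Phi(a)\Phi(b)=0\), which is \eqref{eq:Phi_zero_product}.

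None of these steps is a real obstacle; the work was done in the earlier lemmas. The only point needing care is the identity \(\norm{uy}=\norm{y}\). It follows from the \(C^*\)-identity:
\[
\norm{uy}^2=\norm{y^*u^*uy}=\norm{y^*y}=\norm{y}^2,
\]
since \(u^*u=u^2=1_B\).
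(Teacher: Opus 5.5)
Your proof is correct and follows essentially the same route as the paper. Additivity is inherited from $T$, centrality and $u^2=1_B$ give $\Phi(a)\Phi(b)=T(a)T(b)$, left multiplication by $u$ preserves norms (the paper cites \eqref{ux_equal_x}, and your $C^*$-identity argument $\norm{uy}^2=\norm{y^*u^*uy}=\norm{y}^2$ is an equally valid justification), and zero products follow from $\Phi(0)=0$.
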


\begin{proof}

We first prove \eqref{eq:Phi_additive} and
\eqref{eq:Phi_multiplicative}. 
Let \(a,b\in A\). 
The additivity of $T$ implies that $\Phi$ is additive.
Since \(u\) is a central symmetry,
\[
\Phi(a)\Phi(b)
=uT(a)uT(b)
=u^2T(a)T(b)
=T(a)T(b).
\]
By \eqref{ux_equal_x} and \eqref{eq:norm-multiplicative},
we have
\[
\begin{aligned}
\norm{\Phi(ab)}
=\norm{uT(ab)}
 =\norm{T(ab)}=\norm{T(a)T(b)}
 =\norm{\Phi(a)\Phi(b)}.
\end{aligned}
\]
Therefore, we obtain \eqref{eq:Phi_additive} and \eqref{eq:Phi_multiplicative}.

We next prove \eqref{eq:Phi_involution} and \eqref{eq:Phi_unit}.
Let \(a\in A\). By \eqref{eq:involution} and the fact that \(u\) is a
central symmetry, we obtain
% \[
% \begin{aligned}
% \Phi(a)^*
% =(uT(a))^*
% =T(a^*)u
%  =uT(a^*)
%  =\Phi(a^*).
% \end{aligned}
% \]
\[
\Phi(a^*)=uT(a^*)=T(a^*)u=(uT(a))^*=\Phi(a)^*.
\]
Since \(u=T(1_A)\) and \(u^2=1_B\), we have
\[
\Phi(1_A)=uT(1_A)=u^2=1_B.
\]
Hence \eqref{eq:Phi_involution}  and \eqref{eq:Phi_unit} hold.

It remains to prove that $\Phi$ preserves zero products.
Let \(a,b\in A\) satisfy \(ab=0\). 
Since \(\Phi(0)=0\) by \eqref{eq:Phi_additive},
it follows from \eqref{eq:Phi_multiplicative} that
\[
\norm{\Phi(a)\Phi(b)}
=\norm{\Phi(ab)}
=\norm{\Phi(0)}
=0.
\]
Hence \(\Phi(a)\Phi(b)=0\). 
Therefore, \eqref{eq:Phi_zero_product} holds.
\end{proof}

\section{Proof of the main theorem}

In this section, we complete the proof of
Theorem~\ref{thm:main}.
We first show that the normalized map \(\Phi\) is a
surjective real-linear isometry, and then use its
zero-product-preserving property to prove multiplicativity.

We write \(A_{\mathrm{sa}}\) and \(B_{\mathrm{sa}}\) for the
self-adjoint parts of \(A\) and \(B\), respectively, and \(A_+\) and
\(B_+\) for their positive cones.
Since \(\Phi\) is bijective and preserves the involution, it maps the
self-adjoint part of \(A\) onto that of \(B\); that is,
\begin{equation}\label{eq:self-adjoint_preserve}
\Phi(A_{\mathrm{sa}})=B_{\mathrm{sa}}.
\end{equation}

We next establish a further property of $\Phi$.
\begin{lem}\label{lem:positive_preserve}
The map $\Phi$ preserves positivity; that is, $\Phi(A_+)\subset B_+$.
\end{lem}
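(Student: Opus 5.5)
Write $a\in A_+$ as $a=b^2$ with $b=a^{1/2}\in A_{\mathrm{sa}}$. Then $\Phi(b)$ is self-adjoint by \eqref{eq:self-adjoint_preserve}, so $\Phi(b)^2\in B_+$. It therefore suffices to show that $\Phi(b^2)=\Phi(b)^2$ for self-adjoint $b$. We have not yet established that $\Phi$ preserves squares, so I would instead aim to show directly that $\Phi(a)$ is self-adjoint with spectrum in $[0,\infty)$.

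Let $a\in A_+$ and set $x=\Phi(a)\in B_{\mathrm{sa}}$. Since $B$ is a $C^*$-algebra, it is enough to prove that $\|\,\|x\|1_B - x\,\|\le\|x\|$. I would first show that $\Phi$ is isometric on self-adjoint elements, or at least that $\|\Phi(y)\|$ is controlled. Using \eqref{eq:Phi_multiplicative} with the unit gives $\|\Phi(y)\|=\|\Phi(y)\Phi(1_A)\|=\|\Phi(y)\|$, which is trivial. A better tool is the $C^*$-identity: for $y\in A_{\mathrm{sa}}$,
\[
\|\Phi(y^2)\|=\|\Phi(y)^2\|=\|\Phi(y)\|^2 .
\]
Hence, by induction, $\|\Phi(y^{2^n})\|=\|\Phi(y)\|^{2^n}$.

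The key step is a spectral argument. Suppose $\sigma(x)$ contains some $-\lambda$ with $\lambda>0$. I would work with rational combinations, which $\Phi$ respects by additivity, for example $a+r1_A$ with $r\in\mathbb Q_{>0}$. Since $a+r1_A\ge r1_A$ is invertible, \eqref{eq:Phi_multiplicative} combined with the fact that the norm of an inverse is preserved in the same way shows that $\Phi(a+r1_A)=x+r1_B$ is invertible. Indeed, writing $c=a+r1_A$, we have $\|\Phi(c)\Phi(c^{-1})\|=1$. I would strengthen this by also using the zero-product property \eqref{eq:Phi_zero_product} together with bijectivity. Concretely, I would show that if $\Phi(c)z=0$ then $z=\Phi(d)$ with $\|\Phi(cd)\|=0$, so $cd=0$, so $d=0$. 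The same argument on the right shows that $\Phi(c)$ is not a zero divisor. Because $\Phi(c)$ is self-adjoint, this is not yet invertibility, so I would instead use the norm identity $\|\Phi(c)\Phi(c^{-1})\|=1$ together with $\|\Phi(c^{-1})\Phi(c)\|=1$ and positivity-free estimates.

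The cleaner route, which I would try first, is to prove that $\Phi$ is isometric on $A_{\mathrm{sa}}$ and maps $1_A$ to $1_B$. Given that, $\|\,\|a\|1_A-a\,\|\le\|a\|$ transfers by additivity to $\|\,\|a\|1_B-x\,\|\le\|a\|=\|x\|$ whenever $\|a\|$ is rational. The irrational case is handled by scaling $a$ by a rational so that its norm is close to a rational and then taking limits. That last step needs continuity, which itself follows from isometry on $A_{\mathrm{sa}}$.

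The main obstacle is therefore isometry on $A_{\mathrm{sa}}$. I would obtain it as follows. For $y\in A_{\mathrm{sa}}$, the element $\Phi(y)$ is self-adjoint, and $\|\Phi(y)\|^{2^n}=\|\Phi(y^{2^n})\|$. I would then compare $\Phi$ with its rational-linearity on commuting elements of $C^*(y,1_A)$. Using the zero-product property on functions of $y$ with disjoint supports, $\Phi$ restricted to $C^*(y,1_A)$ becomes additive, unital and zero-product preserving into a commutative subalgebra, and it is norm multiplicative, $\|\Phi(fg)\|=\|\Phi(f)\Phi(g)\|$. One should show it is order-bounded: if $0\le f\le 1$, then $\Phi(f)$ and $\Phi(1-f)$ are self-adjoint with $\|\Phi(f)\Phi(1-f)\|=\|\Phi(f-f^2)\|$. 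An approximation by step-like functions with rational values, for which $\Phi$ acts through orthogonal projection-like images, would then yield $\|\Phi(f)\|\le\|f\|$, and the symmetric argument with $\Phi^{-1}$ gives equality. This approximation argument is where I expect the real difficulty to lie.
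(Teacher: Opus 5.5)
There is a genuine gap. None of your routes is completed, and the one you commit to (prove that $\Phi$ is isometric on $A_{\mathrm{sa}}$ first, then transfer $\|\,\|a\|1_A-a\,\|\le\|a\|$) cannot work in that order.

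\begin{itemize}
\item At this stage $\Phi$ is only known to be additive, i.e.\ $\mathbb Q$-linear, with no bound on $\|\Phi(y)\|$ in terms of $\|y\|$. Your approximation scheme in $C^*(y,1_A)$ needs continuity of $\Phi$ to pass from approximants to $f$, and that continuity is exactly what is missing.
\item Rational-valued step functions are usually not elements of $C^*(y,1_A)\cong C(\sigma(y))$; they are not continuous when $\sigma(y)$ is connected.
\item Nothing established so far makes $\Phi(C^*(y,1_A))$ commutative. The norm identity only gives $\|\Phi(f)\Phi(g)\|=\|\Phi(g)\Phi(f)\|$.
\item The identity $\|\Phi(y^{2^n})\|=\|\Phi(y)\|^{2^n}$ gives nothing without an a priori two-sided bound. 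In the paper it is combined with the open mapping theorem, which needs continuity. Continuity is obtained \emph{from} this lemma: positivity gives order preservation, which gives $\|\Phi(y)\|\le\|y\|$ on $A_{\mathrm{sa}}$. Your plan reverses this order and is circular as it stands.
\item The invertibility route is also unproved. Invertibility of $x+r1_B$ is only asserted, and the non-zero-divisor argument does not give it.
\end{itemize}

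The missing idea is a difference-of-squares factorization. It needs no continuity and realizes, at the level of norms, your first wish $\Phi(b^2)=\Phi(b)^2$. With $b=a^{1/2}$, pick $k\in\N$ with $\|\Phi(b)\|\le k$ and set $c=b/k$. Then $\Phi(c)=\Phi(b)/k$ is self-adjoint of norm at most $1$, so $0\le\Phi(c)^2\le 1_B$ and $\|1_B-\Phi(c)^2\|\le1$. Now use additivity, $\Phi(1_A)=1_B$, the factorization $1_A-c^2=(1_A-c)(1_A+c)$ and the product norm identity:
\begin{align*}
\|1_B-\Phi(c^2)\|
&=\|\Phi\bigl((1_A-c)(1_A+c)\bigr)\|
=\|\Phi(1_A-c)\Phi(1_A+c)\|\\
&=\|(1_B-\Phi(c))(1_B+\Phi(c))\|
=\|1_B-\Phi(c)^2\|\le1.
\end{align*}
Since $\Phi(c^2)$ is self-adjoint, this forces $\sigma(\Phi(c^2))\subset[0,2]$. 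Hence $\Phi(c^2)\ge0$ and $\Phi(a)=k^2\Phi(c^2)\ge0$. The rational rescaling to get $\|\Phi(c)\|\le1$ is what replaces the continuity you were trying to obtain.
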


\begin{proof}
Let \(a\in A_+\) and set \(b=a^{1/2}\in A_+\). Choose \(k\in\N\)
such that
\[
\norm{\Phi(b)}\leq k,
\]
and set 
\[c=\frac{b}{k}.\] 
Since $\Phi$ is additive, it is \(\Q\)-linear, and hence
$\Phi(c)=\frac{1}{k}\Phi(b)$.
Thus $\norm{\Phi(c)}\leq 1$.
Moreover, since \(c\in A_{\mathrm{sa}}\), it follows from
\eqref{eq:self-adjoint_preserve} that \(\Phi(c)\in B_{\mathrm{sa}}\). 
Therefore,
$-1_B\leq \Phi(c)\leq 1_B$,
and hence
$0\leq \Phi(c)^2\leq 1_B$.
Consequently,
\[
\norm{1_B-\Phi(c)^2}\leq 1.
\]

Using \eqref{eq:Phi_unit}, \eqref{eq:Phi_additive}, and
\eqref{eq:Phi_multiplicative}, we have
\begin{align*}
\norm{1_B-\Phi(c^2)}=\norm{\Phi(1_A-c^2)}
=\norm{\Phi\bigl((1_A-c)(1_A+c)\bigr)}
=\norm{\Phi(1_A-c)\Phi(1_A+c)}.
\end{align*}
By additivity, \eqref{eq:Phi_unit}, and the estimate above, we obtain
\begin{align*}
\norm{\Phi(1_A-c)\Phi(1_A+c)}&=\norm{(1_B-\Phi(c))(1_B+\Phi(c))}\\&=\norm{1_B-\Phi(c)^2}
\leq 1.
\end{align*}
Combining these relations gives
\(\norm{1_B-\Phi(c^2)}\leq1\).
Since \(c^2\in A_{\mathrm{sa}}\), we have
\(\Phi(c^2)\in B_{\mathrm{sa}}\). 
The preceding inequality therefore
implies that
$\sigma(\Phi(c^2))\subset [0,2]$,
so that $\Phi(c^2)\geq 0$.

Finally, since \(a=b^2=k^2c^2\), the additivity of $\Phi$ gives
\[
\Phi(a)=k^2\Phi(c^2)\geq0.
\]
Thus $\Phi(A_+)\subset B_+$.
\end{proof}

The next lemma shows that $\Phi$ does not increase norms on the
self-adjoint part of $A$.

\begin{lem}\label{lem:bounded}
For every $a\in A_{\mathrm{sa}}$, we have
$\norm{\Phi(a)}\leq\norm{a}$.
\end{lem}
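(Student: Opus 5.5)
Since $\Phi$ is additive (hence $\mathbb Q$-linear), maps $A_{\mathrm{sa}}$ into $B_{\mathrm{sa}}$ by \eqref{eq:self-adjoint_preserve}, and is positive by Lemma~\ref{lem:positive_preserve}, I will use order monotonicity. Because $\Phi$ is additive and positive, it is order-preserving on $A_{\mathrm{sa}}$: if $x\le y$ in $A_{\mathrm{sa}}$, then $y-x\in A_+$, so $\Phi(y)-\Phi(x)=\Phi(y-x)\in B_+$.

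Fix $a\in A_{\mathrm{sa}}$ and a rational number $r$ with $r>\norm{a}$. Then $-r1_A\le a\le r1_A$. Applying monotonicity and using $\Phi(r1_A)=r\Phi(1_A)=r1_B$, which follows from $\mathbb Q$-linearity and \eqref{eq:Phi_unit}, we get
\[
-r1_B\le \Phi(a)\le r1_B .
\]
Since $\Phi(a)$ is self-adjoint, this gives $\sigma(\Phi(a))\subset[-r,r]$, hence $\norm{\Phi(a)}\le r$. Letting $r\downarrow\norm{a}$ through the rationals yields $\norm{\Phi(a)}\le\norm{a}$.

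I expect no real obstacle. The only care needed is that the scalars used are rational, since real homogeneity of $\Phi$ is not yet known, and that the limit is taken over rationals $r$ decreasing to $\norm{a}$, which are dense in $\mathbb R$.
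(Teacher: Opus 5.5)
Your proposal is correct and follows essentially the same route as the paper's proof. Like the paper, you derive order preservation on $A_{\mathrm{sa}}$ from additivity and Lemma~\ref{lem:positive_preserve}, apply it to $-r1_A\leq a\leq r1_A$ for rational $r>\norm{a}$ (using $\mathbb{Q}$-linearity together with \eqref{eq:Phi_unit}), and then let $r$ decrease to $\norm{a}$ through the rationals.
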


\begin{proof}
We first note that $\Phi$ is order-preserving on
$A_{\mathrm{sa}}$. 
Indeed, if $a,b\in A_{\mathrm{sa}}$ satisfy
$a\leq b$, then $b-a\in A_+$. 
Hence, by the additivity of $\Phi$ and Lemma~\ref{lem:positive_preserve},
\[
\Phi(b)-\Phi(a)=\Phi(b-a)\in B_+,
\]
so that $\Phi(a)\leq\Phi(b)$.

Now let $a\in A_{\mathrm{sa}}$, and choose $r\in\mathbb{Q}$ such that
$r>\norm{a}$. Since $a$ is self-adjoint,
\[
-r1_A\leq a\leq r1_A.
\]
The order preservation established above, together with the
$\mathbb{Q}$-linearity of $\Phi$ and \eqref{eq:Phi_unit}, gives
\[
-r1_B\leq\Phi(a)\leq r1_B.
\]
Since $\Phi(a)\in B_{\mathrm{sa}}$, it follows that
$\norm{\Phi(a)}\leq r$. As this holds for every
$r\in\mathbb{Q}$ with $r>\norm{a}$, we conclude that $\norm{\Phi(a)}\leq\norm{a}$.
\end{proof}

The following lemma upgrades the preceding norm estimate to equality.

\begin{lem}\label{lem:norm-preserving}
For every \(a\in A_{\mathrm{sa}}\), we have $\norm{\Phi(a)}=\norm{a}$.
\end{lem}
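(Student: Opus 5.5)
The natural route is to apply Lemma~\ref{lem:bounded} to the inverse map. For this, I will show that \(\Phi^{-1}\colon B\to A\) satisfies the same hypotheses that were used to prove Lemmas~\ref{lem:positive_preserve} and~\ref{lem:bounded}. Once that is done, the same argument gives \(\norm{\Phi^{-1}(x)}\leq\norm{x}\) for every \(x\in B_{\mathrm{sa}}\). Applying this to \(x=\Phi(a)\) with \(a\in A_{\mathrm{sa}}\), which lies in \(B_{\mathrm{sa}}\) by \eqref{eq:self-adjoint_preserve}, gives \(\norm{a}\leq\norm{\Phi(a)}\). Combined with Lemma~\ref{lem:bounded}, this yields equality.

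First I check the required properties of \(\Psi=\Phi^{-1}\). Additivity holds because \(\Phi\) is an additive bijection. Involution preservation follows from \eqref{eq:Phi_involution}: apply \(\Phi^{-1}\) to \(\Phi(\Psi(x)^*)=x^*\) to get \(\Psi(x^*)=\Psi(x)^*\). The unit condition is \(\Psi(1_B)=1_A\) by \eqref{eq:Phi_unit}.

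The product-norm identity for \(\Psi\) does not come directly from \eqref{eq:Phi_multiplicative}, since that identity compares norms in \(B\) and says nothing yet about norms in \(A\). I therefore avoid it. The proof of Lemma~\ref{lem:positive_preserve} used \eqref{eq:Phi_multiplicative} in only one place, namely to obtain \(\norm{1-\Phi(c^2)}\leq 1\) from \(\norm{1-\Phi(c)^2}\leq 1\). So I will prove directly that \(\Psi\) is positive.

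Let \(y\in B_+\), write \(y=z^2\) with \(z\in B_{\mathrm{sa}}\), and set \(w=\Psi(z)\in A_{\mathrm{sa}}\). It suffices to show \(\Psi(z^2)=w^2\), i.e.\ \(\Phi(w^2)=\Phi(w)^2\), because \(w^2\geq0\). This square-preservation on self-adjoint elements is the step I expect to be the main obstacle.

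My first attempt uses only additivity and the product-norm identity. For \(a\in A_{\mathrm{sa}}\) and \(r\in\Q\), apply \eqref{eq:Phi_multiplicative} to \((a-r1_A)(a+r1_A)=a^2-r^2 1_A\). This gives
\[
\norm{\Phi(a^2)-r^2 1_B}=\norm{\Phi(a)^2-r^2 1_B}\qquad(r\in\Q).
\]
Applying \eqref{eq:Phi_multiplicative} to \((a-s1_A)(a-t1_A)\) with \(s,t\in\Q\) gives more generally
\[
\norm{\Phi(a^2)-(s+t)\Phi(a)+st1_B}=\norm{(\Phi(a)-s1_B)(\Phi(a)-t1_B)}.
\]
The right-hand side depends only on \(\Phi(a)\). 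By density of \(\Q\) and continuity of norms in the parameters, these identities extend to all real \(s,t\).

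To extract \(\Phi(a^2)=\Phi(a)^2\), I would take \(s=t\) large. Then
\[
\norm{\Phi(a^2)-2s\Phi(a)+s^2}=\norm{(\Phi(a)-s)^2}=\max_{\lambda\in\sigma(\Phi(a))}(s-\lambda)^2 .
\]
Writing \(D=\Phi(a^2)-\Phi(a)^2\), the left-hand side is \(\norm{(\Phi(a)-s)^2+D}\), where \(D\) is self-adjoint. Varying \(s\) over all reals, the condition \(\norm{(\Phi(a)-s)^2+D}=\norm{(\Phi(a)-s)^2}\) for all \(s\) should force \(D=0\). To see this, I would pass to a state at which \(D\) attains its extreme spectral value. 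Applying the same identity to \(a\) replaced by polynomials in \(a\), and by \(a\pm\epsilon a^2\), should supply enough freedom to conclude.

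If this direct route stalls, the fallback is to prove \(\norm{\Phi(a)}\geq\norm{a}\) directly. Suppose \(\norm{\Phi(a)}<\norm{a}\). Take a nonnegative continuous function \(f\) supported near the point of \(\sigma(a)\) where \(|\lambda|=\norm{a}\), and set \(e=f(a)\neq 0\). Then \(e(a\mp\norm{a}1_A)\) is small, and by \eqref{eq:Phi_multiplicative} together with Lemma~\ref{lem:bounded} its image \(\Phi(e)(\Phi(a)\mp\norm{a}1_B)\) is also small. Since \(\Phi(a)\mp\norm{a}1_B\) is invertible with inverse of norm at most \((\norm{a}-\norm{\Phi(a)})^{-1}\), this forces \(\Phi(e)\) to have small norm. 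Letting the support shrink and using \(\Q\)-homogeneity, I expect to reach a contradiction with the injectivity of \(\Phi\), and this is where the real work lies.
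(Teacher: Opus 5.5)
Your reduction is sound: the proof of Lemma~\ref{lem:bounded} uses only additivity, the unit, the involution and positivity, so positivity of $\Phi^{-1}$ would indeed give $\norm{a}\le\norm{\Phi(a)}$. But you never prove that positivity. The route through $\Phi(w^2)=\Phi(w)^2$ rests on the claim that $\norm{(\Phi(a)-s)^2+D}=\norm{(\Phi(a)-s)^2}$ for all real $s$ forces $D=0$, and this claim is false. In $C([0,1])$ take $x(t)=t$ and a nonzero continuous $D$ with $0\le D\le \tfrac1{16}$ supported in $[\tfrac14,\tfrac34]$. Then $\norm{(x-s)^2}=(\lvert s-\tfrac12\rvert+\tfrac12)^2$ is attained at an endpoint of $[0,1]$, where $D=0$. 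For $t\in[\tfrac14,\tfrac34]$ one has $(t-s)^2+D(t)\le(\lvert s-\tfrac12\rvert+\tfrac14)^2+\tfrac1{16}\le(\lvert s-\tfrac12\rvert+\tfrac12)^2$. So the identity holds for every $s$ even though $D\ne0$. Your proposed remedies (polynomials in $a$, or $a\pm\epsilon a^2$) only bring in new unknowns such as $\Phi(a^3)$ and $\Phi(a^4)$. Square preservation is a Jordan-type identity. In the paper it becomes available only after the present lemma, via the isometry property and the Hatori--Watanabe theorem, so it is not a realistic intermediate step here.

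The fallback also stops short. Your estimate gives, for each $\delta>0$, some $e_\delta\in A_{\mathrm{sa}}$ with $\norm{e_\delta}=1$ and $\norm{\Phi(e_\delta)}\le\delta/(\norm{a}-\norm{\Phi(a)})$. Since $e_\delta$ changes with $\delta$, this is entirely compatible with injectivity. The missing ingredient is a lower bound $\norm{b}\le M\norm{\Phi(b)}$ on $A_{\mathrm{sa}}$, and this is exactly what the paper uses. By Lemma~\ref{lem:bounded}, $\Phi|_{A_{\mathrm{sa}}}$ is a bounded real-linear bijection onto $B_{\mathrm{sa}}$, so the open mapping theorem gives $M$. (You also need real-linearity, not just $\Q$-homogeneity, to write $\Phi(a\mp\norm{a}1_A)=\Phi(a)\mp\norm{a}1_B$.) The paper then removes $M$ using $\norm{\Phi(a^{2^n})}=\norm{\Phi(a)}^{2^n}$. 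With $M$ in hand your fallback would also close, since $1\le M\delta/(\norm{a}-\norm{\Phi(a)})$ for every $\delta>0$. Alternatively, your first route can be completed without any square identity, because positivity of $\Phi^{-1}$ follows from \eqref{eq:Phi_zero_product}. If $a\in A_{\mathrm{sa}}$ and $\Phi(a)\ge0$, write $a=a_+-a_-$ with $a_+a_-=0$. Then $p=\Phi(a_+)$ and $q=\Phi(a_-)$ are positive by Lemma~\ref{lem:positive_preserve}, with $pq=0$. Hence $0\le q\Phi(a)q=-q^3\le0$, so $q=0$, and injectivity gives $a_-=0$. After that, the argument of Lemma~\ref{lem:bounded} applied to $\Phi^{-1}$ yields $\norm{a}\le\norm{\Phi(a)}$ directly, with no open mapping theorem.
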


\begin{proof}

By \eqref{eq:Phi_additive} and \eqref{eq:self-adjoint_preserve}, the restriction
\[
\Phi|_{A_{\mathrm{sa}}}
\colon A_{\mathrm{sa}}\to B_{\mathrm{sa}}
\]
is an additive bijection. 
For notational simplicity, we denote this
restriction by $\Phi_{\mathrm{sa}}$.
For $a,b\in A_{\mathrm{sa}}$,
Lemma~\ref{lem:bounded} gives
\[
\norm{\Phi_{\mathrm{sa}}(a)-\Phi_{\mathrm{sa}}(b)}
=
\norm{\Phi_{\mathrm{sa}}(a-b)}
\leq\norm{a-b}.
\]
Thus $\Phi_{\mathrm{sa}}$ is continuous. Since it is additive, it is
real-linear. Consequently, $\Phi_{\mathrm{sa}}$ is a bounded
real-linear bijection between the real Banach spaces
$A_{\mathrm{sa}}$ and $B_{\mathrm{sa}}$.
The open mapping theorem therefore yields a constant
\(M>0\) such that
\[
\norm{a}
\leq M\norm{\Phi_{\mathrm{sa}}(a)}
=M\norm{\Phi(a)}
\qquad(a\in A_{\mathrm{sa}}).
\]

Let \(a\in A_{\mathrm{sa}}\). 
For every nonnegative integer \(j\), both \(a^{2^j}\) and \(\Phi(a^{2^j})\) are self-adjoint by \eqref{eq:self-adjoint_preserve}. 
Hence
\eqref{eq:Phi_multiplicative} and the \(C^*\)-identity give
\[
\begin{aligned}
\norm{\Phi(a^{2^{j+1}})}=\norm{\Phi(a^{2^j}a^{2^j})}=\norm{\Phi(a^{2^j})^2}=\norm{\Phi(a^{2^j})}^2.
\end{aligned}
\]
It follows by induction that
\[
\norm{\Phi(a^{2^n})}
=\norm{\Phi(a)}^{2^n}
\qquad(n\in\N).
\]
Similarly, since \(a\) is self-adjoint, we have $\norm{a^{2^n}}=\norm{a}^{2^n}$.
Consequently,
\[
\begin{aligned}
\norm{a}^{2^n}
=\norm{a^{2^n}}
\leq M\norm{\Phi(a^{2^n})}=M\norm{\Phi(a)}^{2^n}.
\end{aligned}
\]
Taking the \(2^n\)-th root of both sides, we obtain
\[
\norm{a}\leq M^{1/2^n}\norm{\Phi(a)}.
\]
Letting \(n\to\infty\) gives
\[
\norm{a}\leq\norm{\Phi(a)}.
\]
The reverse inequality follows from Lemma~\ref{lem:bounded}.
Therefore, $\norm{\Phi(a)}=\norm{a}$.
\end{proof}

We now extend norm preservation from $A_{\mathrm{sa}}$ to all of $A$.
Together with the additivity of $\Phi$, this yields the desired
real-linear isometric structure.
\begin{prop}\label{prop:real-linear_isometry}
The map $\Phi$ is a surjective real-linear isometry. 
\end{prop}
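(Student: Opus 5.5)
We need to show that $\Phi$ is real-linear and satisfies $\norm{\Phi(a)}=\norm{a}$ for every $a\in A$. Surjectivity is already known, since $\Phi$ is a bijection.

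\emph{Real-linearity.} For $a\in A$ write $a=h+ik$ with $h=(a+a^*)/2$ and $k=(a-a^*)/(2i)$ in $A_{\mathrm{sa}}$. Additivity gives $\Phi(a)=\Phi(h)+\Phi(ik)$. By the proof of Lemma~\ref{lem:norm-preserving}, $\Phi$ is real-linear on $A_{\mathrm{sa}}$. It remains to handle the skew-adjoint part $iA_{\mathrm{sa}}$. For $k\in A_{\mathrm{sa}}$ we have $(ik)^*=-ik$, so $\Phi(ik)^*=-\Phi(ik)$. Thus $-i\Phi(ik)$ is self-adjoint, and Lemma~\ref{lem:bounded} does not apply to it directly. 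Instead I would use the product norm condition: since $(ik)^2=-k^2$,
\[
\norm{\Phi(ik)}^2=\norm{\Phi(ik)^*\Phi(ik)}=\norm{\Phi(ik)\Phi(ik)}=\norm{\Phi(-k^2)}=\norm{k^2}=\norm{k}^2 .
\]
This uses \eqref{eq:Phi_multiplicative}, \eqref{eq:Phi_involution}, the identity $\Phi(ik)^*\Phi(ik)=-\Phi(ik)^2$, and Lemma~\ref{lem:norm-preserving}. Hence $\norm{\Phi(ik)}=\norm{k}$ for every $k\in A_{\mathrm{sa}}$. The map $k\mapsto\Phi(ik)$ is therefore additive and isometric on the real Banach space $A_{\mathrm{sa}}$, so it is continuous and real-linear. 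Combining the two parts, $\Phi$ is real-linear on all of $A$ and bounded, with $\norm{\Phi(a)}\le\norm{h}+\norm{k}\le 2\norm{a}$.

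\emph{Isometry.} For general $a\in A$ I would use the $C^*$-identity together with \eqref{eq:Phi_multiplicative} and \eqref{eq:Phi_involution}:
\[
\norm{\Phi(a)}^2=\norm{\Phi(a)^*\Phi(a)}=\norm{\Phi(a^*)\Phi(a)}=\norm{\Phi(a^*a)}.
\]
Since $a^*a\in A_{\mathrm{sa}}$, Lemma~\ref{lem:norm-preserving} gives $\norm{\Phi(a^*a)}=\norm{a^*a}=\norm{a}^2$. Therefore $\norm{\Phi(a)}=\norm{a}$.

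This one-line computation actually gives the isometry for all $a$ without first establishing real-linearity. The isometry together with additivity then yields continuity, and continuity with additivity yields $\mathbb R$-linearity through the density of $\mathbb Q$ in $\mathbb R$. So the cleanest order is: prove the isometry via $a^*a$, then deduce real-linearity from additivity plus continuity. I expect no serious obstacle. The only delicate point is checking that \eqref{eq:Phi_multiplicative} applies with the pair $(a^*,a)$, which it does for arbitrary $a,b$.
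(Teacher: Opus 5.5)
Your proof is correct and, in the order you recommend, is exactly the paper's argument: the isometry comes from $\norm{\Phi(a)}^2=\norm{\Phi(a^*a)}=\norm{a^*a}=\norm{a}^2$, where the paper uses $aa^*$ in place of $a^*a$, and real-linearity then follows from additivity plus continuity. Your preliminary real-linearity paragraph, based on the decomposition $a=h+ik$, is also valid but, as you noted yourself, redundant.
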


\begin{proof}
Let \(a\in A\). 
Since $aa^*\in A_{\mathrm{sa}}$,
Lemma~\ref{lem:norm-preserving} is applicable. Therefore, the
$C^*$-identity, \eqref{eq:Phi_involution},
\eqref{eq:Phi_multiplicative}, and
Lemma~\ref{lem:norm-preserving} give
\[
\norm{\Phi(a)}^2
=\norm{\Phi(a)\Phi(a^*)}
=\norm{\Phi(aa^*)}
=\norm{aa^*}
=\norm{a}^2.
\]
Thus $\norm{\Phi(a)}=\norm{a}$ for every $a\in A$. Since $\Phi$ is
additive, it follows that $\Phi$ is a real-linear isometry. 
% Its
% surjectivity follows from Proposition~\ref{prop:properties_of_Phi}.
Therefore, $\Phi$ is a surjective real-linear isometry.
\end{proof}

We use the following consequence of a result of Hatori and Watanabe.

\begin{lem}[{\cite[Proposition~2.1]{Hatori_Watanae}}]
\label{lem:Hatori-Watanabe}
Let \(C\) and \(D\) be unital \(C^*\)-algebras with units $1_C$ and $1_D$, respectively, and let
\(R\colon C\to D\) be a surjective real-linear isometry satisfying $R(1_C)=1_D$.
Then there exist a central projection \(e\in D\) and a complex-linear Jordan
\( * \)-isomorphism \(J\colon C\to D\) such that
\[
R(c)=eJ(c)+(1_D-e)J(c)^*
\qquad(c\in C).
\]
\end{lem}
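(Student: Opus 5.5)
This lemma is quoted from Hatori and Watanabe, but it can also be derived from known structure theorems. The plan is to split \(R\) into its complex-linear and conjugate-linear parts and apply Kadison's theorem to a complexification.

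First, reduce to a complex-linear problem. I would use the theorem of Dang (and Chu--Dang--Russo--Ventura) on surjective real-linear isometries between \(C^*\)-algebras, viewed as real JB\(^*\)-triples. Such an isometry preserves the triple product \(\{x,y,z\}=\tfrac12(xy^*z+zy^*x)\). The centroid-type decomposition then yields a central projection \(e\in D\) such that \(c\mapsto eR(c)\) is complex-linear and \(c\mapsto(1_D-e)R(c)\) is conjugate-linear. To locate \(e\) concretely, I would set \(w=R(i1_C)\). Triple-product preservation with \(R(1_C)=1_D\) gives \(w^*=-w\) and \(w^2=-1_D\), so \(w=i(2e-1_D)\) for a projection \(e\). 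A further triple-product computation, for instance comparing \(\{1,1,x\}\) with \(\{i1,i1,x\}\), shows that \(e\) is central.

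Next, treat the two corners separately. Define
\[
J(c)=eR(c)+(1_D-e)R(c^*)^{*} \qquad (c\in C).
\]
The map \(c\mapsto(1_D-e)R(c^*)^*\) is complex-linear, since it composes two conjugate-linear maps. Hence \(J\) is a complex-linear bijection. Because \(e\) is central, \(\|J(c)\|=\max\{\|eR(c)\|,\|(1_D-e)R(c^*)^*\|\}\).

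The main obstacle is showing that each corner map is isometric. I would do this using the fact that \(R\) preserves the triple product. Then the corner map \(c\mapsto (1_D-e)R(c^*)^*\) also preserves it, because \(x\mapsto x^*\) is a triple automorphism, and it is unital. So \(J\) is a unital surjective complex-linear triple isomorphism. By Kaup's theorem, such a map is an isometry, and Kadison's theorem then shows that \(J\) is a Jordan \(*\)-isomorphism, the unitary factor being \(J(1_C)=1_D\).

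Finally, recover \(R\). Since \(e\) is central and \(J(c)^*=J(c^*)\), we have \(eJ(c)=eR(c)\). Likewise \((1_D-e)J(c)^*=(1_D-e)J(c^*)=(1_D-e)R(c)\). Adding the two gives \(R(c)=eJ(c)+(1_D-e)J(c)^*\), which is the claimed formula.
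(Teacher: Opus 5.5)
\noindent The paper gives no proof of this lemma; it simply imports it from Hatori--Watanabe. Your route is a natural reconstruction: Dang's theorem that \(R\) preserves \(\{x,y,z\}=\tfrac12(xy^*z+zy^*x)\), a central splitting via \(w=R(i1_C)\), then Kadison. As written, however, it has a genuine error in the definition of \(J\), and the centrality computation you suggest does not work.

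\textbf{The definition of \(J\).} The map \(c\mapsto(1_D-e)R(c^*)^*\) is not complex-linear. It is a composite of \emph{three} conjugate-linear maps: \(c\mapsto c^*\), the conjugate-linear corner \(y\mapsto(1_D-e)R(y)\), and the outer adjoint. Hence it is conjugate-linear. In fact, triple-product preservation and \(R(1_C)=1_D\) give \(R(c^*)=R(\{1_C,c,1_C\})=\{1_D,R(c),1_D\}=R(c)^*\). So \(R(c^*)^*=R(c)\), and your \(J\) is just \(R\), which is not complex-linear unless \(e=1_D\). Your last step also breaks: \((1_D-e)J(c^*)=(1_D-e)R(c)^*\), not \((1_D-e)R(c)\).

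The fix is to drop one adjoint: set \(J(c)=eR(c)+(1_D-e)R(c)^*\). This \(J\) is complex-linear, unital and bijective, and it satisfies \(J(c^*)=J(c)^*\), \(eJ(c)=eR(c)\) and \((1_D-e)J(c)^*=(1_D-e)R(c)\). Adding the last two recovers the stated formula. With this \(J\), the step you call the main obstacle becomes trivial. Since \(e\) is central, \(\norm{J(c)}=\max\{\norm{eR(c)},\norm{((1_D-e)R(c))^*}\}=\norm{R(c)}=\norm{c}\), so Kaup's theorem is not needed. Kadison's theorem can be bypassed as well. \(J\) preserves triple products, because the adjoint is a conjugate-linear triple automorphism and \(e\) splits triple products. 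Hence \(J(c\circ d)=J(\{c,1_C,d\})=J(c)\circ J(d)\), where \(c\circ d=\tfrac12(cd+dc)\).

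\textbf{Centrality of \(e\).} Comparing \(\{1_C,1_C,x\}\) with \(\{i1_C,i1_C,x\}\) gives nothing. Both equal \(x\), and both images equal \(R(x)\) because \(w\) is unitary. Use \(\{i1_C,x,i1_C\}=-x^*\) instead. This gives \(wR(x)^*w=-R(x)^*\), so \(wyw=-y\) for every \(y\in D\) by surjectivity. Multiplying on the left by \(w\) and using \(w^2=-1_D\) yields \(wy=yw\). Thus \(w\) and \(e=\tfrac12(1_D-iw)\) are central. Finally, \(\{i1_C,1_C,c\}=ic\) gives \(R(ic)=\tfrac12(wR(c)+R(c)w)=wR(c)\). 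This is exactly the statement that \(c\mapsto eR(c)\) is complex-linear and \(c\mapsto(1_D-e)R(c)\) is conjugate-linear, which you attributed to a centroid decomposition.
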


By Proposition~\ref{prop:real-linear_isometry} and \eqref{eq:Phi_unit}, the map
$\Phi$ satisfies the hypotheses of Lemma~\ref{lem:Hatori-Watanabe}.
Hence there exist a central projection $e_1\in B$ and a complex-linear
Jordan $*$-isomorphism $J\colon A\to B$ such that
\[
\Phi(a)=e_1J(a)+e_2J(a)^*
\qquad
(a\in A),
\]
where  $e_2=1_B-e_1$.
Then $e_1$ and $e_2$ are orthogonal central projections satisfying
$e_1+e_2=1_B$. 
For $j=1,2$, the ideal $e_jB$ is a unital
$C^*$-algebra with unit $e_j$.

For $j=1,2$, define a map $\Phi_j\colon A\to e_jB$ by 
\[
\Phi_j(a)=e_j\Phi(a)
\qquad
(a\in A).
\]
Then $\Phi=\Phi_1+\Phi_2$, and the representation of $\Phi$ above
gives
\[
\Phi_1(a)=e_1J(a),
\qquad
\Phi_2(a)=e_2J(a)^*
\qquad
(a\in A).
\]
Since $J$ is complex-linear, $\Phi_1$ is complex-linear, whereas
$\Phi_2$ is conjugate-linear. 
Moreover, since $J(1_A)=1_B$,
\begin{equation}\label{eq:Phi_j_unital}
\Phi_1(1_A)=e_1,
\qquad
\Phi_2(1_A)=e_2.
\end{equation}

We next prove that $\Phi$ is multiplicative. Since $e_1B$ and $e_2B$
are orthogonal ideals and $\Phi=\Phi_1+\Phi_2$, it suffices to show
that both $\Phi_1$ and $\Phi_2$ are multiplicative. 
We begin by recording some properties of these maps.
\begin{lem}\label{lem:Phi1-Phi2-bounded-linear-unital}
  For each $j\in\set{1,2}$, the map $\Phi_j$ is surjective and bounded, and it
preserves zero products.
\end{lem}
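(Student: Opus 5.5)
The plan is to read off all three properties from the decomposition $\Phi=\Phi_1+\Phi_2$ with $\Phi_j=e_j\Phi$. I will use three facts established above. First, $e_1,e_2$ are orthogonal central projections with $e_1+e_2=1_B$. Second, $\Phi$ is a bijective isometry (Proposition~\ref{prop:real-linear_isometry}). Third, $\Phi$ preserves zero products (Proposition~\ref{prop:properties_of_Phi}).

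\emph{Surjectivity.} Let $y\in e_jB$. Since $\Phi$ is surjective, there is $a\in A$ with $\Phi(a)=y$. Then $\Phi_j(a)=e_j\Phi(a)=e_jy=y$, because $y=e_jy$.

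\emph{Boundedness.} For every $a\in A$,
\[
\norm{\Phi_j(a)}=\norm{e_j\Phi(a)}\le\norm{e_j}\,\norm{\Phi(a)}\le\norm{a},
\]
using $\norm{e_j}\le1$ and the isometry property of $\Phi$. So each $\Phi_j$ is bounded with norm at most $1$. It is real-linear, and in fact complex-linear for $j=1$ and conjugate-linear for $j=2$, by the explicit formulas $\Phi_1(a)=e_1J(a)$ and $\Phi_2(a)=e_2J(a)^*$.

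\emph{Zero products.} Suppose $ab=0$. By \eqref{eq:Phi_zero_product}, $\Phi(a)\Phi(b)=0$. Since $e_j$ is a central projection,
\[
\Phi_j(a)\Phi_j(b)=e_j\Phi(a)e_j\Phi(b)=e_j^2\Phi(a)\Phi(b)=e_j\Phi(a)\Phi(b)=0 .
\]

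No step here is a real obstacle. The only point needing care is that the centrality of $e_j$ is used twice: to move $e_j$ past $\Phi(a)$ in the product, and to ensure that $e_jB$ is an ideal, so that the surjectivity argument lands in $e_jB$. This lemma just sets up the hypotheses (surjective, bounded, zero-product preserving) needed to invoke \cite[Theorem~4.11]{Chebotar} on each $\Phi_j$ in the next step.
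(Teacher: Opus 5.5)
Your proposal is correct and follows the paper's proof essentially verbatim. Surjectivity comes from that of $\Phi$ together with $e_jy=y$, boundedness from $\norm{e_j}\le 1$ and the isometry property, and zero-product preservation from centrality of $e_j$ combined with \eqref{eq:Phi_zero_product}. One minor point: $e_jy=y$ for $y\in e_jB$ needs only $e_j^2=e_j$, so centrality is not used in the surjectivity step; it matters later, for $e_jB$ to be a unital $C^*$-algebra when Lemma~\ref{lem:multiplicative} is applied.
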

\begin{proof}

Fix $j\in\{1,2\}$. We first prove that $\Phi_j$ is surjective. Let
$x\in e_jB$. Since $\Phi$ is surjective, there exists $a\in A$ such
that $\Phi(a)=x$. 
Because $e_j$ is the unit of $e_jB$, we have $e_jx=x$, and therefore
\[
\Phi_j(a)=e_j\Phi(a)=e_jx=x.
\]
Thus $\Phi_j$ is surjective.

We next prove that $\Phi_j$ is bounded. Since $\Phi$ is
norm-preserving and $\norm{e_j}\leq1$, we have
\[
\norm{\Phi_j(a)}
=\norm{e_j\Phi(a)}
\leq\norm{\Phi(a)}
=\norm{a}
\qquad
(a\in A).
\]
Thus $\Phi_j$ is bounded.

It remains to show that each \(\Phi_j\) preserves zero products.
Suppose that \(a,b\in A\) satisfy
\(ab=0\). 
By
\eqref{eq:Phi_zero_product}, we have $\Phi(a)\Phi(b)=0$. Since $e_j$ is
a central projection,
\[
\Phi_j(a)\Phi_j(b)
=e_j\Phi(a)e_j\Phi(b)
=e_j\Phi(a)\Phi(b)
=0.
\]
Thus $\Phi_j$ preserves zero products.
\end{proof}

We use the following immediate consequence of a theorem of Chebotar
et al.

\begin{lem}[{\cite[Theorem~4.11]{Chebotar}}]
\label{lem:multiplicative}
Let $C$ and $D$ be unital $C^*$-algebras with units $1_C$ and $1_D$, respectively, and let
$U\colon C\to D$ be a surjective bounded complex-linear map satisfying
$U(1_C)=1_D$. If $U$ preserves zero products, then $U$ is
multiplicative.
\end{lem}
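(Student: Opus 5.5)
The plan is to derive the statement as a formal specialization of \cite[Theorem~4.11]{Chebotar}, using the unit condition $U(1_C)=1_D$ to remove the weight that appears in the general conclusion there. As I recall it, that theorem treats a bounded surjective complex-linear map $\theta$ from a unital $C^*$-algebra onto a unital $C^*$-algebra (more generally, a suitable Banach algebra) that preserves zero products. Its conclusion is that $h=\theta(1)$ is a central invertible element of the target and that $\theta(x)=h\varphi(x)$ for some algebra homomorphism $\varphi$. Equivalently, in ``bilinear'' form,
\[
\theta(x)\theta(y)=\theta(1)\,\theta(xy)\qquad(x,y\in C).
\]
I have not checked the exact form of that statement, so the first step is to confirm that our hypotheses on $U$ match its hypotheses: $C$ and $D$ are unital $C^*$-algebras, and $U$ is complex-linear, bounded, surjective, and zero-product preserving. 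Then I would read off whichever of the two forms of the conclusion it actually states.

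Granting this, the deduction is immediate. Put $h=U(1_C)=1_D$. In the product form, $U(a)=1_D\varphi(a)=\varphi(a)$, so $U$ coincides with a homomorphism and is therefore multiplicative. In the bilinear form, $U(a)U(b)=U(1_C)U(ab)=U(ab)$ for all $a,b\in C$, which is again multiplicativity.

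The main obstacle is aligning the hypotheses: some versions of the Chebotar--Ke--Lee--Wong results are stated for algebras generated by idempotents, or with a prime or semiprime target. If the cited theorem does not literally cover arbitrary unital $C^*$-algebras, my fallback is the bilinear-map approach. I would define the bounded bilinear map $V(a,b)=U(a)U(b)$, which vanishes whenever $ab=0$. By the theorem of Alaminos, Brešar, Extremera and Villena, unital $C^*$-algebras are zero product determined. Concretely, such a $V$ satisfies
\[
V(ab,c)=V(a,bc)\qquad(a,b,c\in C),
\]
which is proved by a spectral (functional-calculus) argument on self-adjoint elements and then extended using boundedness. Taking $a=1_C$ gives $V(b,c)=V(1_C,bc)$, that is,
\[
U(b)U(c)=U(1_C)U(bc)=U(bc),
\]
which is the desired multiplicativity. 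Surjectivity is then not even needed for this step.
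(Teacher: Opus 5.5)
Your main route is the paper's own: the paper gives no separate argument and presents the lemma as an immediate consequence of \cite[Theorem~4.11]{Chebotar}, where the normalization $U(1_C)=1_D$ removes the central weight from the weighted-homomorphism conclusion, exactly as you do. Your fallback is also correct. It uses the Alaminos--Bre\v{s}ar--Extremera--Villena identity $V(ab,c)=V(a,bc)$ for continuous bilinear maps on $C^*$-algebras that vanish on zero products, which makes it independent of the precise hypotheses of the Chebotar--Ke--Lee--Wong theorem and shows that surjectivity is not needed.
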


We now apply Lemma~\ref{lem:multiplicative} to the two components of
$\Phi$. For the conjugate-linear component $\Phi_2$, we pass to the
opposite algebra.

\begin{lem}\label{lem:Phi1-Phi2-multiplicative}
The maps \(\Phi_1\) and \(\Phi_2\) are multiplicative; that is,
\[
\Phi_j(ab)=\Phi_j(a)\Phi_j(b)
\qquad(a,b\in A,\ j=1,2).
\]
\end{lem}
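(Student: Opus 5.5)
For $j=1$ I will apply Lemma~\ref{lem:multiplicative} directly, with $C=A$, $D=e_1B$ and $U=\Phi_1$. By Lemma~\ref{lem:Phi1-Phi2-bounded-linear-unital}, $\Phi_1$ is surjective, bounded and preserves zero products. It is complex-linear because $J$ is, and \eqref{eq:Phi_j_unital} gives $\Phi_1(1_A)=e_1$, the unit of $e_1B$. The lemma therefore yields $\Phi_1(ab)=\Phi_1(a)\Phi_1(b)$.

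For $j=2$, $\Phi_2$ is conjugate-linear, so Lemma~\ref{lem:multiplicative} cannot be applied to it as it stands. I will compose with the involution of $e_2B$ and work in the opposite algebra. Let $(e_2B)^{\mathrm{op}}$ be the $C^*$-algebra with the same underlying set, norm, involution and linear structure as $e_2B$, but with product $x\cdot y=yx$; its unit is still $e_2$. Define
\[
U\colon A\to (e_2B)^{\mathrm{op}},\qquad U(a)=\Phi_2(a)^*=e_2J(a).
\]
This map is complex-linear, bounded (since $\|U(a)\|=\|\Phi_2(a)\|$), surjective (because $*$ is a bijection of $e_2B$ and $\Phi_2$ is surjective), and unital (since $U(1_A)=e_2^*=e_2$).

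The step that needs care is zero-product preservation for $U$. Let $ab=0$. Then $b^*a^*=0$, so Lemma~\ref{lem:Phi1-Phi2-bounded-linear-unital} gives $\Phi_2(b^*)\Phi_2(a^*)=0$. By \eqref{eq:Phi_involution} and the centrality of $e_2$ we have $\Phi_2(c^*)=\Phi_2(c)^*$, so
\[
\Phi_2(b)^*\Phi_2(a)^*=0,\qquad\text{that is,}\qquad U(b)U(a)=0 \text{ in } e_2B.
\]
In the opposite algebra this says exactly $U(a)\cdot U(b)=0$.

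Lemma~\ref{lem:multiplicative} now shows that $U$ is multiplicative into $(e_2B)^{\mathrm{op}}$, namely $U(ab)=U(b)U(a)$ in $e_2B$. Taking adjoints gives
\[
\Phi_2(ab)=U(ab)^*=U(a)^*U(b)^*=\Phi_2(a)\Phi_2(b).
\]
The main subtlety is choosing the opposite algebra so that conjugate-linearity and the order reversal cancel. Everything else reduces to Lemma~\ref{lem:multiplicative} together with the routine checks that $(e_2B)^{\mathrm{op}}$ is a unital $C^*$-algebra and that $U$ is surjective.
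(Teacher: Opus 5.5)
Your proposal is correct and follows the paper's proof essentially step for step: you apply Lemma~\ref{lem:multiplicative} directly to $\Phi_1$, and for $\Phi_2$ you pass to $a\mapsto\Phi_2(a)^*$ into $(e_2B)^{\mathrm{op}}$. Two differences are minor. Your zero-product check detours through $b^*a^*=0$, whereas $\Phi_2(b)^*\Phi_2(a)^*=(\Phi_2(a)\Phi_2(b))^*=0$ already suffices. You should also treat the degenerate cases $e_j=0$ separately, as the paper does, since there $\Phi_j=0$ is trivially multiplicative and Lemma~\ref{lem:multiplicative} need not be invoked with $D=\{0\}$.
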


\begin{proof}

We first consider the complex-linear map $\Phi_1$. 
If \(e_1=0\), then \(\Phi_1=0\) is multiplicative.
Suppose that \(e_1\ne0\).
By
Lemma~\ref{lem:Phi1-Phi2-bounded-linear-unital} and
\eqref{eq:Phi_j_unital}, the map $\Phi_1\colon A\to e_1B$ satisfies
the hypotheses of Lemma~\ref{lem:multiplicative}. Hence $\Phi_1$ is
multiplicative.

We next consider the conjugate-linear map $\Phi_2$. 
If \(e_2=0\), then \(\Phi_2=0\) is multiplicative.
Suppose that \(e_2\ne0\).
Let
$(e_2B)^{\mathrm{op}}$ denote the opposite $C^*$-algebra of $e_2B$.
It is a unital $C^*$-algebra with unit $e_2$ and has the same
underlying linear space, norm, and involution as $e_2B$. 
Its product
is defined by
\[
x\mathbin{\circ_{\mathrm{op}}}y=yx
\qquad
(x,y\in e_2B).
\]

Define a map $
\Psi_2\colon A\to (e_2B)^{\mathrm{op}}$ by 
\[
\Psi_2(a)=\Phi_2(a)^*
\qquad(a\in A).
\]
Since both $\Phi_2$ and the involution are conjugate-linear, $\Psi_2$
is complex-linear. 
Moreover, since the involution is a surjective isometry, Lemma~\ref{lem:Phi1-Phi2-bounded-linear-unital} implies that
$\Psi_2$ is surjective and bounded. 
By
\eqref{eq:Phi_j_unital},
\[
\Psi_2(1_A)=\Phi_2(1_A)^*=e_2.
\]
Hence $\Psi_2$ is a surjective bounded complex-linear map satisfying $\Psi_2(1_A)=e_2$.

To apply Lemma~\ref{lem:multiplicative} to \(\Psi_2\), it remains to show that \(\Psi_2\) preserves zero products. 
Suppose that \(a,b\in A\) satisfy \(ab=0\).
Since \(\Phi_2\) preserves zero products, we have
\(\Phi_2(a)\Phi_2(b)=0\). Hence
\[
\Psi_2(a)\mathbin{\circ_{\mathrm{op}}}\Psi_2(b)
=\Phi_2(b)^*\Phi_2(a)^*
=(\Phi_2(a)\Phi_2(b))^*
=0.
\]

Thus $\Psi_2$ satisfies all the hypotheses of
Lemma~\ref{lem:multiplicative} and is therefore multiplicative.
Consequently, for $a,b\in A$,
\[
\begin{aligned}
\Phi_2(ab)
&=\Psi_2(ab)^*
 =\bigl(\Psi_2(a)\mathbin{\circ_{\mathrm{op}}}\Psi_2(b)\bigr)^*\\
&=(\Psi_2(b)\Psi_2(a))^*
 =\Phi_2(a)\Phi_2(b).
\end{aligned}
\]
Hence $\Phi_2$ is multiplicative.
\end{proof}

We are now ready to complete the proof of the main theorem. We begin
by combining the multiplicativity of the two components with the
orthogonality of their ranges.

\begin{proof}[\textbf{Proof of Theorem~\ref{thm:main}}]
Let \(a,b\in A\). 
Since \(e_1\) and \(e_2\) are  orthogonal
central projections, we have
\[
\begin{aligned}
\Phi_1(a)\Phi_2(b)
  &=e_1\Phi(a)e_2\Phi(b)=0,\\
\Phi_2(a)\Phi_1(b)
  &=e_2\Phi(a)e_1\Phi(b)=0.
\end{aligned}
\]
Therefore, Lemma~\ref{lem:Phi1-Phi2-multiplicative}, combined with $\Phi=\Phi_1+\Phi_2$, gives
\[
\begin{aligned}
\Phi(a)\Phi(b)
&=(\Phi_1(a)+\Phi_2(a))(\Phi_1(b)+\Phi_2(b))\\
&=\Phi_1(a)\Phi_1(b)+\Phi_2(a)\Phi_2(b)\\
&=\Phi_1(ab)+\Phi_2(ab)
=\Phi(ab).
\end{aligned}
\]
Thus $\Phi$ is multiplicative. Together with
\eqref{eq:Phi_involution} and
Proposition~\ref{prop:real-linear_isometry}, this shows that $\Phi$ is a
real $*$-isomorphism.
Moreover, 
since $\Phi(a)=uT(a)$ and $u^2=1_B$, we obtain the desired representation
\[
T(a)=u\Phi(a)
\qquad
(a\in A).
\]

We next prove uniqueness. Suppose that
\[
T(a)=v\Psi(a)
\qquad (a\in A),
\]
where \(v\) is a central symmetry in \(B\) and
\(\Psi\colon A\to B\) is a real \({*}\)-isomorphism.
Since the real $*$-isomorphism $\Psi$ satisfies $\Psi(1_A)=1_B$, we have
\[
u=T(1_A)=v\Psi(1_A)=v.
\]
Consequently,
\[
\Phi(a)=uT(a)=uv\Psi(a)=\Psi(a)
\qquad
(a\in A),
\]
where we have used $u=v$ and $u^2=1_B$. Thus $v=u$ and
$\Psi=\Phi$, proving uniqueness.

Conversely, let $u$ be a central symmetry in $B$, let
$\Phi\colon A\to B$ be a real $*$-isomorphism, and define
\[
T(a)=u\Phi(a)
\qquad
(a\in A).
\]
Since left multiplication by the unitary element $u$ is bijective and
$\Phi$ is bijective, $T$ is bijective. 
The additivity of $\Phi$
implies that $T$ is additive, and hence \eqref{eq:norm-additive} holds.

Let $a,b\in A$. 
Since $u$ is a central symmetry and $\Phi$ is
multiplicative,
\[
T(a)T(b)
=u\Phi(a)u\Phi(b)
=\Phi(a)\Phi(b)
=\Phi(ab).
\]
Because $u$ is unitary, it follows that
\[
\norm{T(ab)}
=\norm{u\Phi(ab)}
=\norm{\Phi(ab)}
=\norm{T(a)T(b)}.
\]
Thus \eqref{eq:norm-multiplicative} holds. Finally, since $u$ is a
central symmetry and $\Phi$ preserves the involution,
\[
T(a^*)
=u\Phi(a^*)
=u\Phi(a)^*
=(u\Phi(a))^*
=T(a)^*.
\]
Hence \eqref{eq:involution} also holds. This completes the proof.
\end{proof}

\begin{rem}
By \cite[Proposition~3.9]{Tomforde}, the real \( * \)-isomorphism
\(\Phi\) decomposes into a complex-linear part and a conjugate-linear
part over complementary central projections. Since a real
\( * \)-isomorphism carries central projections bijectively onto
central projections, if \(A\) is simple, then one of these two
summands vanishes. Hence \(\Phi\), and therefore \(T\), is either
complex-linear or conjugate-linear.
\end{rem}

\subsection*{Acknowledgments}
The author is very grateful to Professor Takeshi Miura
for his valuable comments and helpful discussions.
The author was supported by JST SPRING,
Grant Number \mbox{JPMJSP2121}.


\begin{thebibliography}{99}

\bibitem{Charnow}
    A. Charnow, \textit{The automorphisms of an algebraically closed field},
Canad. Math. Bull. \textbf{13} (1970), 95--97.

  \bibitem{Chebotar}
  M.~A. Chebotar, W.-F. Ke, P.-H. Lee and N.-C. Wong,
  \textit{Mappings preserving zero products},
  Studia Math. \textbf{155} (2003), no.~1, 77--94.

   \bibitem{generalization_of_G-K}
  Y. Dong, P.-K. Lin and B. Zheng,
  \textit{A generalization of the Gelfand--Kolmogoroff theorem},
  Publ. Math. Debrecen \textbf{94} (2019), no. 1--2, 263--268.

  \bibitem{GelfandKolmogoroff}
  I. Gelfand and A. Kolmogoroff, \textit{On rings of continuous functions on topological spaces},
  Dokl. Akad. Nauk. SSSR \textbf{22} (1939), 11--15.

  \bibitem{Hatori_Watanae}
  O. Hatori and K. Watanabe, 
  \textit{Isometries between groups of invertible elements in $C^*$-algebras}, Studia Math. \textbf{209} (2012), no. 2, 103--106.


  \bibitem{Kadison_isometry}
  R.~V. Kadison, 
  \textit{Isometries of operator algebras}, Ann. of Math. (2) \textbf{54} (1951), 325--338.

  \bibitem{kestelman}
    H. Kestelman, \textit{Automorphisms of the field of complex numbers},
Proc. London Math. Soc. (2) \textbf{53} (1951), 1--12.
  
  \bibitem{Kowalski_Slodkowski}
  S. Kowalski and Z. Słodkowski, \textit{A characterization of multiplicative linear functionals in Banach algebras}, Studia Math. \textbf{67} (1980), no. 3, 215--223.


 

  


  

  \bibitem{taira}
  T.~Miura and T.~Takahashi,
  \textit{Ring isomorphisms in norm between
  Banach algebras of continuous complex-valued functions},
  preprint, arXiv:2601.11165v1 (2026).

  \bibitem{Molnar}
  L. Molnár, \textit{Some characterizations of the automorphisms of $B(H)$ and $C(X)$}, Proc. Amer. Math. Soc. \textbf{130} (2002), no. 1, 111--120.


  \bibitem{tabor}
  J. Tabor, \textit{Stability of the Fischer--Musz\'ely functional equation}, Publ. Math. Debrecen \textbf{62} (2003), no. 1--2, 205--211.


  \bibitem{Tomforde}
  M.~Tomforde,
  \textit{Continuity of ring \( * \)-homomorphisms between
  \(C^*\)-algebras},
  New York J. Math. \textbf{15} (2009), 161--167.

\end{thebibliography}
\end{document}